%% LaTeX 2e

\RequirePackage[l2tabu, orthodox]{nag}

\documentclass[12pt]{amsart}
\usepackage{fullpage,url,amssymb,enumerate}
\usepackage[all]{xy} % for commutative diagrams
\usepackage{comment}
\usepackage{graphicx}

\usepackage[OT2,T1]{fontenc}

\usepackage{amsthm}

\usepackage{color}

\usepackage{tikz}
%\usepackage[inline]{showlabels}

 % for defined terms

\def\bC{\mathbb{C}}

\def\cL{\mathcal{L}}

\def\bP{\mathbb{P}}
\def\cP{\mathcal{P}}

\def\bR{\mathbb{R}}

\def\bZ{\mathbb{Z}}

\def\vr{\overrightarrow{r}}
\def\vx{\overrightarrow{x}}
\def\vz{\overrightarrow{z}}

\usepackage{mathdots}

\DeclareMathOperator{\Fl}{Fl}
\DeclareMathOperator{\FM}{FM}
\DeclareMathOperator{\Gr}{Gr}
\DeclareMathOperator{\GZ}{GZ}

\DeclareMathOperator{\id}{id}

\DeclareMathOperator{\Perm}{Perm}

\DeclareMathOperator{\rk}{rank}

\DeclareMathOperator{\Spec}{Spec}

%%% \numberwithin{equation}{section}
%%% \newtheorem{theorem}[equation]{Theorem} 
%%% etc.

\newtheorem{thm}{Theorem}[subsection]
\newtheorem{lem}[thm]{Lemma}
\newtheorem{cor}[thm]{Corollary}
\newtheorem{prop}[thm]{Proposition}

\newtheorem{rem}[thm]{Remark}
\newtheorem{ex}[thm]{Example}
\newtheorem{defn}[thm]{Definition}

\makeatletter
\g@addto@macro\bfseries{\boldmath} % This makes math in section titles bold.
\makeatother

\usepackage{microtype}

\usepackage[
%	draft,
%	colorlinks,
	backref,
	pdfauthor={}, % add other authors
	pdftitle={hhmp_decomp},
]{hyperref}

\begin{document}

\title{The HHMP decomposition of the permutohedron and degenerations of torus orbits in flag varieties}

\author{Carl Lian}

\address{Department of Mathematics, Tufts University
\hfill \newline\texttt{}
 \indent Medford, MA, USA} \email{Carl.Lian@tufts.edu}

\begin{abstract}
Let $Z\subset\Fl(n)$ be the closure of a generic torus orbit in the full flag variety. Anderson-Tymoczko express the cohomology class of $Z$ as a sum of classes of Richardson varieties. Harada-Horiguchi-Masuda-Park give a decomposition of the permutohedron, the moment map image of $Z$, into subpolytopes corresponding to the summands of the Anderson-Tymoczko formula. We construct an explicit toric degeneration inside $\Fl(n)$ of $Z$ into Richardson varieties, whose moment map images coincide with the HHMP decomposition, thereby obtaining a new proof of the Anderson-Tymoczko formula.
\end{abstract}

\maketitle

\section{Introduction}

Let $\bC^n$ be a complex vector space of dimension $n$ with the standard action of an $n$-dimensional torus $T$. Let $\Fl(n)$ be the variety of complete flags in $\bC^n$, which inherits a standard $T$-action. Let $Z\subset\Fl(n)$ be the closure of the $T$-orbit of a generic point. The cycle class $[Z]$ in $H^{*}(\Fl(n))$ was computed by Anderson-Tymoczko. (We work throughout with rational coefficients.)

\begin{thm}\cite{at}\label{at_formula}
We have
\begin{equation*}
[Z]=\sum_{w\in S_{n-1}}\sigma_{\iota(w)}\sigma_{\overline{\iota}(w_0w)}
\end{equation*}
in $H^{(n-1)(n-2)}(\Fl(n))$.
\end{thm}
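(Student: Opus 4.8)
The plan is to realize $Z$ as the general fiber of an explicit flat, $T$-equivariant family $\pi\colon\mathcal{Z}\to\bA^{1}$ of closed subschemes of $\Fl(n)$ whose special fiber is the reduced union $\bigcup_{w\in S_{n-1}}R_{w}$, where $R_{w}$ denotes the Richardson variety — a transverse intersection of opposite Schubert varieties — with $[R_{w}]=\sigma_{\iota(w)}\sigma_{\overline{\iota}(w_{0}w)}$, so that $\dim R_{w}=n-1=\dim Z$. Granting such a family, Theorem~\ref{at_formula} follows immediately: flatness over the smooth base $\bA^{1}$ makes all fibers rationally equivalent, so $[Z]=[\mathcal{Z}_{0}]$ in $H^{*}(\Fl(n))$; the special fiber is pure of dimension $n-1$ with top-dimensional components the (distinct) $R_{w}$, so generic reducedness along each gives $[\mathcal{Z}_{0}]=\sum_{w}[R_{w}]$; and $[R_{w}]=\sigma_{\iota(w)}\sigma_{\overline{\iota}(w_{0}w)}$ by the standard identity expressing the class of a Richardson variety as the product of the two Schubert classes cut out by opposite Borel subgroups. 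So the whole content lies in constructing and analyzing $\pi$.

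To build $\pi$, fix a generic point $p_{0}\in\Fl(n)$, so that $Z=\overline{T\cdot p_{0}}$ is the toric variety whose $T$-weight polytope, in the Plücker embedding $\Fl(n)\hookrightarrow\prod_{i}\bP(\wedge^{i}\bC^{n})$, is the permutohedron $\Pi$, realized as the Minkowski sum of hypersimplices. The HHMP decomposition $\Pi=\bigcup_{w}P_{w}$ should be recast as a \emph{regular} polyhedral subdivision: I would exhibit a convex piecewise-linear height function $\omega$ on the lattice points of $\Pi$ whose domains of linearity are the cells $P_{w}$, reading it off from HHMP's recursive description; recall also that HHMP identify each $P_{w}$ with the moment polytope of $R_{w}$. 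Next I would produce an explicit one-parameter family $g(t)\in GL_{n}$ of matrices, generic for $t\neq0$, whose flag minors have $t$-adic valuations prescribed by $\omega$ — equivalently, a rational curve through $p_{0}$ whose tropicalization induces exactly the HHMP subdivision — and set
\[
\mathcal{Z}:=\overline{\{\,(z,t):t\neq0,\ z\in\overline{T\cdot[g(t)]}\,\}}\ \subseteq\ \Fl(n)\times\bA^{1}.
\]
Then $\mathcal{Z}$ is $T$-stable and flat over $\bA^{1}$, with general fiber a generic torus orbit closure, hence of class $[Z]$, and with special fiber a $T$-stable subscheme of $\Fl(n)$. The crux is to show that the top-dimensional components of $\mathcal{Z}_{0}$ are precisely the $R_{w}$, each occurring once. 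By the theory of toric degenerations, the component dominating the cell $P_{w}$ is the toric variety $Y_{P_{w}}$ determined by $P_{w}$ together with a dense torus orbit tracked from $\overline{T\cdot[g(t)]}$ along the linearity cone of $\omega$; one then checks $Y_{P_{w}}=R_{w}$ inside $\Fl(n)$ by showing $R_{w}$ is itself a toric variety with moment polytope $P_{w}$ and the same dense orbit.

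I expect the main obstacle to be precisely this identification of the limit pieces, together with the realizability it presupposes: one must verify that the HHMP subdivision is not merely a (matroid) subdivision of the permutohedron but is actually induced by a family of flags — i.e., realizable over $\bC(\!(t)\!)$ — and that the resulting tropical degeneration of $Z$ is reduced, so that $\mathcal{Z}_{0}=\bigcup_{w}R_{w}$ has no extraneous lower-dimensional components and each $R_{w}$ appears with multiplicity one. Realizability should be available because the cells $P_{w}$ are themselves moment polytopes of genuine subvarieties of $\Fl(n)$, which ought to let one assemble $g(t)$ cell by cell, but arranging this globally and uniformly in $n$, and controlling multiplicities, is where the real work lies. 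The remaining ingredients — flatness of $\pi$, distinctness of the $R_{w}$ (immediate from distinctness of the $P_{w}$), the dimension bound on the pairwise intersections $R_{v}\cap R_{w}$, and the Richardson class formula — are routine once the degeneration is in hand.
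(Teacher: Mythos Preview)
Your outline matches the paper's strategy --- a toric degeneration of $Z$ governed by the HHMP subdivision, with limit pieces identified as the Richardson varieties --- but there is a gap in ``recall also that HHMP identify each $P_{w}$ with the moment polytope of $R_{w}$.'' HHMP do no such thing: they give only the polyhedral subdivision $\Perm(n)=\bigcup_{w}\GZ(w)$ and say nothing about Richardson varieties. The identification of $\GZ(w)$ with the moment polytope of the specific Richardson $\Sigma^{F}_{\iota(w)}\cap\Sigma^{F'}_{\overline{\iota}(w_{0}w)}$ is exactly the content of the paper's two main lemmas (Theorems~\ref{FM=GZ} and~\ref{orbit=richardson}), or can alternatively be assembled from Nadeau--Tewari \cite{nt_remixed} together with Tsukerman--Williams \cite{tw}. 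In your sketch this identification is doing all of the work in the step ``one then checks $Y_{P_{w}}=R_{w}$''; without an independent argument for it the proposal is incomplete at precisely the point where the substance lies.

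On execution, the paper sidesteps the regularity and realizability questions you flag by not seeking a global height function at all. Instead it runs an explicit \emph{sequence} of elementary degenerations, each sending a single matrix entry to zero and extracting two limit orbit closures by hand. Iterating row by row produces concrete matrices $A_{w}$ (two nonzero entries per row, in positions read off from $w$) whose orbit closures $Z_{w}$ are the candidate components. An inductive rank computation shows the flag matroid polytope of $A_{w}$ equals $\GZ(w)$ (Theorem~\ref{FM=GZ}), whence $\dim Z_{w}=n-1$; since the $\GZ(w)$ already tile $\Perm(n)$, no further components can appear and all multiplicities are one (Corollary~\ref{Z_to_Zw}). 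A direct verification of Schubert rank conditions on the columns of $A_{w}$ then gives $Z_{w}=R_{w}$ (Theorem~\ref{orbit=richardson}). So the paper's route buys you exactly what your proposal defers: an explicit $g(t)$, built step by step, and a hands-on proof of the cell-to-Richardson matching.
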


See \S\ref{sec:prelim} for notation. The class $[Z]$ is equal to that of a \emph{regular semisimple Hessenberg variety} in $\Fl(n)$ with Hessenberg function $h(i)=i+1$, which in turn is cut out by degeneracy loci whose classes are determined by the work of Fulton \cite{fulton}. The coefficients of $[Z]$ when expressed in the Schubert basis were previously determined by Klyachko \cite[Theorem 4]{k} in terms of representation theory, and a positive combinatorial interpretation was recently given by Nadeau-Tewari \cite{nt_forest}.

The form of Theorem \ref{at_formula} suggests that one might hope for the existence of a (toric) degeneration of $Z$ into the union of Richardson varieties (generically transverse intersections of two Schubert varieties) $Z_w$ in $\Fl(n)$ of class $\sigma_{\iota(w)}\sigma_{\overline{\iota}(w_0w)}$. The purpose of this paper is to construct such a degeneration, thereby giving a new proof of Theorem \ref{at_formula}.

We show moreover that this degeneration is already encoded in a polyhedral decompositition of the permutohedron $\Perm(n)$ given by Harada-Horiguchi-Masuda-Park \cite{hhmp}. Their decomposition, which we refer to as the HHMP decomposition, is given by a union
\begin{equation*}
\Perm(n)=\bigcup_{w\in S_{n-1}}\GZ(w)
\end{equation*}
of subpolytopes, indexed by permutations $w\in S_{n-1}$, which are faces of the \emph{Gelfand-Zetlin} polytope. The correspondence between the subpolytopes $\GZ(w)$ and the classes $\sigma_{\iota(w)}\sigma_{\overline{\iota}(w_0w)}$ appearing in the Anderson-Tymoczko formula is also shown to be volume-preserving, suitably interpreted.

More precisely, we identify the subpolytopes $\GZ(w)$ with the moment map images of the components $Z_w$ appearing in the special fiber of our degeneration. The $Z_w$ are themselves orbit closures of special points in $\Fl(n)$, and their moment map images are therefore \emph{flag matroid polytopes} $\FM(w)$. We summarize the new results below.

\begin{thm}
There exists an embedded toric degeneration of $Z\subset\Fl(n)$ into irreducible components $Z_w\subset\Fl(n)$, which are equal to $T$-orbit closures of special flags $\cL_w\in \Fl(n)$, and which all appear with multiplicity 1. Furthermore:
\begin{enumerate}
\item (Theorem \ref{FM=GZ}) $Z_w$ is the $T$-orbit closure of a special flag $\cL_w\in \Fl(n)$, whose associated 
flag matroid polytope $\FM(w)$ is equal to the polytope $\GZ(w)$ appearing in the HHMP decomposition.
\item (Theorem \ref{orbit=richardson}) $Z_w$ is a Richardson variety of class $\sigma_{\iota(w)}\sigma_{\overline{\iota}(w_0w)}$.
\end{enumerate}
\end{thm}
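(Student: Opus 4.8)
The plan is to produce the degeneration as an explicit one-parameter family of torus-orbit closures inside $\Fl(n)$, engineered on the level of moment polytopes so that the special fiber realizes the HHMP decomposition; the two cited theorems then identify the pieces and upgrade this to a new proof of Theorem~\ref{at_formula}.

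\textbf{Step 1: the family.} Fix coordinates on $\bC^n$ diagonalizing $T$ and record a flag $\cL\in\Fl(n)$ by its Plücker vector in a product of Grassmannians $\Gr(k,n)$. Since a generic orbit closure $Z$ is the toric variety of the permutohedron $\Perm(n)$, every flat degeneration of $Z$ into torus-invariant subvarieties corresponds to a regular subdivision of $\Perm(n)$; the idea is to choose a weight function $\mu$ on the Plücker coordinates whose induced regular subdivision is exactly $\bigcup_{w\in S_{n-1}}\GZ(w)$. The HHMP description of each face $\GZ(w)$ of the Gelfand--Zetlin polytope prescribes which Plücker coordinates must survive in the limit over the cell indexed by $w$, and this is what pins down $\mu$. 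Concretely one then takes a one-parameter family $p_t$ of generic flags (a curve in the space of generic $T$-orbits) governed by $\mu$, and lets $\pi\colon\mathcal{Z}\to\bA^1$ be the projection of $\mathcal{Z}:=\overline{\bigcup_{t\neq 0}\overline{T\cdot p_t}\times\{t\}}\subset\Fl(n)\times\bA^1$; equivalently, $\mathcal{Z}_0$ is cut out of $\Fl(n)$ by the initial ideal $\operatorname{in}_{\mu}(I_Z)$.

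\textbf{Step 2: the special fiber.} The fibers $\pi^{-1}(t)$ with $t\neq 0$ all carry the Hilbert polynomial of $Z$, so $\pi$ is flat and $[\mathcal{Z}_0]=[Z]$ in $H^*(\Fl(n))$. As $\mathcal{Z}_0$ is $T$-invariant it is a union of $T$-orbit closures; the toric dictionary identifies the maximal cells $\GZ(w)$ of the subdivision with the moment polytopes of certain components $Z_w=\overline{T\cdot\cL_w}$, where $\cL_w$ is obtained as the limit of $p_t$ along the sub-torus isolating the cell. To pass from this set-theoretic picture to the scheme-theoretic assertion --- $\mathcal{Z}_0$ reduced, of pure dimension $n-1$, with each $Z_w$ appearing once and no embedded components --- I would argue by degrees: granting Theorems~\ref{FM=GZ} and~\ref{orbit=richardson}, $\deg Z_w$ equals the normalized volume of $\FM(w)=\GZ(w)$ while $\deg Z$ equals the normalized volume of $\Perm(n)$, so writing $[\mathcal{Z}_0]=\sum_w m_w[Z_w]$ with all $m_w\geq 1$ and invoking the volume-preserving property of the HHMP decomposition forces $m_w=1$ for every $w$.

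\textbf{Step 3: conclusion.} By Theorem~\ref{FM=GZ} the moment (flag matroid) polytope of $Z_w$ is $\GZ(w)$, and by Theorem~\ref{orbit=richardson} $Z_w$ is a Richardson variety of class $\sigma_{\iota(w)}\sigma_{\overline{\iota}(w_0w)}$; this is exactly the asserted degeneration. Taking cohomology classes and using flatness of $\pi$ gives $[Z]=[\mathcal{Z}_0]=\sum_{w\in S_{n-1}}\sigma_{\iota(w)}\sigma_{\overline{\iota}(w_0w)}$, reproving Theorem~\ref{at_formula}. I expect the principal obstacle to be Step~1: writing down $\mu$ (equivalently, the flags $\cL_w$) explicitly and proving that the resulting regular subdivision of $\Perm(n)$ is precisely the HHMP decomposition rather than some refinement of it, together with the reducedness of $\operatorname{in}_{\mu}(I_Z)$ needed to make the special fiber genuinely equal to $\bigcup_w Z_w$ as schemes. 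The volume-preserving property of HHMP is the key input that lets the multiplicity count in Step~2 come out to $1$ without a direct computation in the Plücker ideal.
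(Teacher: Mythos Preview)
Your proposal sketches a plausible strategy, but it takes a genuinely different route from the paper and leaves unexecuted exactly the step that constitutes the paper's main contribution.

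\textbf{How the paper actually proceeds.} The paper does \emph{not} produce a single weight $\mu$ on Pl\"ucker coordinates and take an initial ideal. Instead it runs an explicit \emph{sequence} of elementary degenerations on matrices: starting from a generic $n\times n$ matrix $A$, it sends the entries of the first row to $0$ one at a time (left to right), at each step exhibiting two irreducible subschemes $Z^j$ and $Z^j_{+}$ of the flat limit by writing down matrices for suitable one-parameter limits in $T$. It then iterates the same procedure on the $(n-1)\times(n-1)$ submatrix obtained by deleting the first row and one specified column, and so on. The end products are explicit matrices $A_w$, described by a simple placement rule for symbols $\star_1,\star_2$ indexed by $w\in S_{n-1}$; the flags $\cL_w$ are $\cL(A_w)$. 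Theorems~\ref{FM=GZ} and~\ref{orbit=richardson} are then proved for these particular $A_w$ (the first by an induction on $n$ comparing rank functions of $A_w$ and $A_{w'}$, the second by directly checking Schubert incidence conditions column by column). Only \emph{after} $\FM(w)=\GZ(w)$ is established does the paper conclude, via the moment-map dictionary of \S\ref{sec:moment}, that the $Z_w$ are $(n-1)$-dimensional, hence genuine components, and that no further components occur because the $\GZ(w)$ already tile $\Perm(n)$.

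\textbf{What your approach would buy, and where it stalls.} Your framework via regular subdivisions and initial ideals is more conceptual and would yield a single one-parameter degeneration rather than a long sequence. But as you yourself flag, everything hinges on producing $\mu$ and proving that the induced subdivision is exactly HHMP and not a refinement, together with an explicit description of the limit flags $\cL_w$ so that Theorems~\ref{FM=GZ} and~\ref{orbit=richardson} even have content. The paper's answer to this obstacle is not to find $\mu$ at all, but to bypass it with the hands-on matrix construction, which simultaneously manufactures the $\cL_w$ and makes the inductive proof of $\FM(w)=\GZ(w)$ tractable. Your multiplicity-$1$ argument via volumes is correct in spirit and close to the paper's (which instead invokes the general fact from \S\ref{sec:moment} that components of a toric degeneration are automatically reduced and that their moment images subdivide $\Perm(n)$); note, though, that you only need Theorem~\ref{FM=GZ} for that step, not Theorem~\ref{orbit=richardson}.
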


Combining these conclusions yields a new proof of Theorem \ref{at_formula}. Theorems \ref{FM=GZ} and \ref{orbit=richardson} are largely implicit in the work of other authors. Nadeau-Tewari \cite[\S 6-7]{nt_remixed} identify the polytopes $\GZ(w)$ with \emph{Bruhat interval polytopes} in the sense of Tsukerman-Williams \cite{tw}. Bruhat interval polytopes are in turn flag matroid polytopes \cite[Proposition 2.9]{tw} and moment map images of Richardson varieties \cite[Remark 7.11]{tw}. Our main contribution is the explicit construction of the degeneration witnessing the Anderson-Tymoczko formula; we give self-contained proofs of Theorems \ref{FM=GZ} and \ref{orbit=richardson} for completeness.

It may be of interest to construct similar degenerations for the more general Hessenberg varieties considered by Anderson-Tymoczko, whose classes are computed by a similar formula, or for torus orbit closures in other Lie types, where we are not aware of such formulas. Outside of the toric setting, we have carried out similar degenerations on the Grassmannians in connection with counting curves on projective spaces, see \cite[\S 8]{l_coll}.

We review preliminaries in \S\ref{sec:prelim} and the HHMP decomposition in \S\ref{sec:hhmp}. We construct the degeneration of $Z$ in \S\ref{sec:explicit_degen}. We prove Theorem \ref{FM=GZ} in \S\ref{sec:fmp}, which implies that the $Z_w$ are the only components that appear in our degeneration. We prove Theorem \ref{orbit=richardson} in \S\ref{sec:at}, which completes the proof of Theorem \ref{at_formula}. We explain how our degeneration pushes forward to Grassmannians in \S\ref{sec:grass}, recovering a result of Berget-Fink \cite[Theorem 5.1]{bf}.

\subsection{Acknowledgments}

Portions of this project were completed with the support of an NSF postdoctoral fellowship, DMS-2001976, an AMS-Simons travel grant, and during the author's visit to SLMath (formerly MSRI). We are grateful to Philippe Nadeau and Vasu Tewari for many helpful comments and references to the literature, to Matt Larson for pointing out an oversight in the original draft of this work, and to the referees for their suggestions for various improvements to the exposition.

\section{Preliminaries}\label{sec:prelim}

\subsection{Permutations}

Let $[n]=\{1,2,\ldots,n\}$. Permutations $w\in S_n$ are understood to be functions $w:[n]\to [n]$. 
\begin{defn}
Let $w\in S_{n}$ be a permutation. For $j\in[n]$, we define $r_j(w)\in[1,n+1-j]$ to be the integer such that $w^{-1}(j)$ is the $r_j(w)$-th largest integer among $w^{-1}(j),w^{-1}(j+1),\ldots,w^{-1}(n)$. We also write $\vr(w)=(r_1(w),\ldots,r_{n}(w))$.
\end{defn}

\begin{ex}\label{first_example}
Let 
\begin{equation*}
w=
\begin{bmatrix}
1 & 2 & 3 & 4 & 5 & 6 & 7\\
3 & 7 & 1 & 2 & 5 & 4 & 6
\end{bmatrix}
\in S_7. 
\end{equation*}
This permutation will appear in a running example throughout the paper. We have 
\begin{equation*}
w^{-1}=
\begin{bmatrix}
1 & 2 & 3 & 4 & 5 & 6 & 7\\
3 & 4 & 1 & 6 & 5 & 7 & 2
\end{bmatrix}
\in S_7
\end{equation*}
and $\vr(w)=(3,3,1,3,2,2,1)$.
\end{ex}

The vector $\vr(w)$ is, after subtracting the vector $(1,\ldots,1)$, known as the \emph{Lehmer code} of the \emph{inverse} permutation $w^{-1}$. In this way, permutations $w$ are clearly in bijection with integer vectors $\vr$ of length $n$ with $r_j\in[1,n+1-j]$. It will be convenient to pass between these two indexings of elements of $S_{n}$. We will often write $r_j$ and $\vr$ instead of $r_j(w)$ and $\vr(w)$ when the permutation $w$ has been fixed.

\begin{defn}
The \emph{length} $\ell(w)$ of a permutation $w\in S_n$ is the minimal number of simple transpositions $(i,i+1)$ which need to be composed to obtain $w$. Equivalently, the length $\ell(w)$ is equal to the number of \emph{descents} of $w$, that is, pairs $(i,j)$ for which $i<j$ and $w(i)>w(j)$.

We denote by $w_0\in S_n$ the longest permutation, given by $w_0(i)=n+1-i$. 
\end{defn}

\begin{ex}
The permutation $w\in S_7$ from Example \ref{first_example} has the 8 descents
\begin{equation*}
(1,3),(1,4),(2,3),(2,4),(2,5),(2,6),(2,7),(5,6).
\end{equation*}
Moreover, a minimal decomposition of $w$ into simple transpositions is given by
\begin{equation*}
w=(67)(56)(45)(56)(23)(34)(12)(23)
\end{equation*}
so we have $\ell(w)=8$.
\end{ex}

\begin{defn}
Let $\iota:S_{n-1}\hookrightarrow S_{n}$ be the inclusion sending $w:[n-1]\to[n-1]$ to the permutation $\iota(w):[n]\to[n]$ with $\iota(w)(n)=n$ and $\iota(w)(i)=w(i)$ for $i=1,\ldots,n-1$.

Let $\overline{\iota}:S_{n-1}\hookrightarrow S_{n}$ be the inclusion sending $w:[n-1]\to[n-1]$ to the permutation $\iota(w):[n]\to[n]$ with $\iota(w)(1)=1$ and $\iota(w)(i)=w(i-1)+1$ for $i=2,\ldots,n$.
\end{defn}

\begin{ex}\label{ex:iota}
Let $w\in S_7$ be the permutation from Example \ref{first_example}. Then, we have
\begin{equation*}
\iota(w)=
\begin{bmatrix}
1 & 2 & 3 & 4 & 5 & 6 & 7 & 8\\
3 & 7 & 1 & 2 & 5 & 4 & 6 & 8
\end{bmatrix}
\end{equation*}
and
\begin{equation*}
\overline{\iota}(w)=
\begin{bmatrix}
1 & 2 & 3 & 4 & 5 & 6 & 7 & 8\\
1 & 4 & 8 & 2 & 3 & 6 & 5 & 7
\end{bmatrix}
.
\end{equation*}
\end{ex}

\subsection{Schubert and Richardson varieties}

Fix an $n$-dimensional vector space $\bC^n$, and let $\Fl(n)$ be the space of complete flags $\cL=(0\subset L_1\subset\cdots\subset L_{n-1}\subset \bC^n)$. An additive basis of the cohomology $H^{*}(\Fl(n))$ is given by the classes of Schubert varieties, defined as follows. Let $F=(0\subset F_1\subset\cdots\subset F_{n-1}\subset \bC^n)$ be a fixed flag, and let $w\in S_n$ be a permutation.

\begin{defn}
We define the \emph{Schubert variety} $\Sigma^F_{w}\subset\Fl(n)$ to be the locus of flags $\cL\in\Fl(n)$ satisfying
\begin{equation*}
\dim(L_i\cap F_{n-j})\ge\#\left(\{w(1),\ldots,w(i)\}\cap\{j+1,\ldots,n\}\right).
\end{equation*}
The class $[\Sigma^F_{w}]\in H^{2\ell(w)}(\Fl(n))$, which does not depend on $F$, is denoted $\sigma_w$.
\end{defn}

\begin{ex}\label{ex:schubert}
Consider the permutation
\begin{equation*}
\iota(w)=
\begin{bmatrix}
1 & 2 & 3 & 4 & 5 & 6 & 7 & 8\\
3 & 7 & 1 & 2 & 5 & 4 & 6 & 8
\end{bmatrix}
\in S_8
\end{equation*}
from Example \ref{ex:iota}. Then, the Schubert variety $\Sigma^F_{\iota(w)}\subset\Fl(8)$ is defined by the conditions
\begin{align*}
\dim(L_2\cap F_6)&\ge 2\\
\dim(L_2\cap F_2)&\ge 1\\
\dim(L_5\cap F_4)&\ge 2.
\end{align*}
All other constraints on $\dim(L_i\cap F_{n-j})$ are either immediate or follow from the above inequalities. The inequality $\dim(L_2\cap F_6)\ge 2$ imposes 4 conditions on $\cL$, the inequality $\dim(L_2\cap F_2)\ge1$ imposes 3 additional conditions, and the inequality $\dim(L_5\cap F_4)\ge2$ imposes 1 further condition. In total, we these conditions cut out a closed subvariety of $\Fl(8)$ of codimension $\ell(w)=8$.

A generic element $\cL\in \Sigma^F_{\iota(w)}\subset\Fl(8)$ may be represented by an $8\times 8$ matrix
\begin{equation*}
A_F=
\begin{bmatrix}
0 & 0 & * & * & 0 & * & * & * \\
0 & 0 & * & * & 0 & * & * & * \\
* & 0 & * & * & 0 & * & * & * \\
* & 0 & * & * & 0 & * & * & * \\
* & 0 & * & * & * & * & * & * \\
* & 0 & * & * & * & * & * & * \\
* & * & * & * & * & * & * & * \\
* & * & * & * & * & * & * & * 
\end{bmatrix}
\end{equation*}
where the first $i$ columns of $A$ are a basis of $L_i$, and $F_j\subset\bC^8$ is the subspace of column vectors where the first $8-j$ coordinates vanish. The symbol $*$ denotes a generic complex number.
\end{ex}

Let $F'=(0\subset F'_1\subset\cdots\subset F'_{n-1}\subset \bC^n)$ be a second fixed flag, transverse to $F$, in the sense that $\dim(F_j\cap F'_k)=\max(0,j+k-n)$ for all $j,k$.  Let $w'\in S_n$ be a second permutation.

\begin{defn}
The intersection $\Sigma^F_{w}\cap \Sigma^{F'}_{w'}$ is called a \emph{Richardson variety}.
\end{defn}

The following facts are standard: Schubert varieties are irreducible and reduced of codimension $\ell(w)$, and Richardson varieties are, when non-empty, irreducible and reduced of codimension $\ell(w)+\ell(w')$.

\subsection{Flag matroids and rank polytopes}

In this paper, we deal only with realizable flag matroids, which come from complete flags in $\Fl(n)$. See \cite{cdms} for a survey on flag matroids and their associated polytopes.

Fix as before $n$-dimensional vector space $\bC^n$, and fix in addition a basis $\langle e_1,\ldots,e_n\rangle$. Let $A$ be a non-singular $n\times n$ matrix. The columns of $A$ define a complete flag $\cL(A)$ in $\bC^n$ by taking $L_i$ to be the span of the first $i$ column vectors of $A$.

\begin{defn}
For any $S\subset[n]$ and $j\in[n]$, let $A_{S,j}$ denote the sub-matrix obtained by taking the rows of $A$ indexed by $S$ and the first $j$ columns of $A$. Then, the \emph{flag matroid} associated to $A$ is the data of the \emph{rank function} $\rk_A:\cP([n])\to \bZ_{\ge0}$ defined by 
\begin{equation*}
\rk_A(S)=\sum_{j=1}^{n-1}\rk(A_{S,j}).
\end{equation*}
\end{defn}

\begin{ex}
Let $A=A_\cL$ be the matrix from Example \ref{ex:schubert} and let $S=\{1,2,3,4\}$. Then, we have
\begin{equation*}
\rk_A(S)=1+1+2+3+3+4+4=18.
\end{equation*}
If $A$ were instead a generic matrix, then it would be the case that
\begin{equation*}
\rk_A(S)=1+2+3+4+4+4+4=22.
\end{equation*}
\end{ex}

We will often abuse terminology, identifying the matrix $A$ with its associated flag and flag matroid.

\begin{defn}
Let $A$ be a non-singular $n\times n$ matrix. Then, the \emph{flag matroid polytope} $\FM(A)\subset\bR_{\ge0}^n$ is defined to be the locus cut out by the equation
\begin{equation*}
z_1+\cdots+z_n=\frac{n(n-1)}{2}
\end{equation*}
and the inequalities
\begin{equation*}
z_S:=\sum_{i\in S}z_i\le\rk_A(S)
\end{equation*}
for any subset $S\subset\{1,2,\ldots,n\}$.

More generally, given a sequence $\lambda$ of real numbers $\lambda_1\ge\lambda_2\ge\cdots\ge\lambda_{n}\ge0$, the \emph{weighted flag matroid polytope} $\FM(\lambda,A)\subset\bR_{\ge0}^n$ is defined to be the locus of vectors $\vz=(z_1,\ldots,z_n)$ cut out by the equation
\begin{equation*}
z_1+\cdots+z_n=\lambda_1+\cdots+\lambda_{n}
\end{equation*}
and the inequalities
\begin{equation*}
z_S\le \sum_{j=1}^{n}(\lambda_{j}-\lambda_{j+1})\rk(A_{S,j}),
\end{equation*}
for any subset $S\subset\{1,2,\ldots,n\}$, where by convention we set $\lambda_{n+1}=0$. Taking $\lambda=(n-1,n-2,\ldots,0)$ recovers the definition of $\FM(A)$.
\end{defn}
The required upper bound on $z_S$ may be re-written as
\begin{equation*}
z_S\le \sum_{j=1}^{n}(\rk(A_{S,j})-\rk(A_{S,j-1}))\lambda_{j},
\end{equation*}
where the coefficient in front of $\lambda_{j}$ is 1 if adding the $j$-th column to $A_{S,j-1}$ to obtain $A_{S,j}$ increases the rank, and 0 otherwise.

\begin{defn}
Suppose that $A$ has the property that $A_{S,j}$ has maximal rank for any $S,j$. Then, $A$ is said to define the \emph{uniform} flag matroid.

The \emph{permutohedron} $\Perm(n)\subset\bR^n$ is the flag matroid polytope $\FM(A)$ associated to the uniform flag matroid. Similarly, the \emph{weighted permutohedron} $\Perm(\lambda)\subset\bR^n$ is the weighted flag matroid polytope $\FM(\lambda,A)$ for the uniform flag matroid.
\end{defn}

Note that a generic matrix $A$ defines the uniform flag matroid. The permutohedron is equivalently the convex hull of the points $(w(1)-1,\ldots,w(n)-1)$, where $w$ ranges over all permutations in $S_n$. Similarly, the weighted permutohedron is the convex hull of the points $(\lambda_{w(1)},\ldots,\lambda_{w(n)})$ for any $\lambda$. For any $A$, we have $\FM(\lambda,A)\subset\Perm(\lambda)$.

\begin{ex}
Consider the case $n=3$, which will be a running example throughout the paper. The permutohedron $\Perm(3)\subset\bR^3$ is a regular hexagon. Its projection to the plane given by the first two coordinates is depicted below.
\begin{center}
    \begin{tikzpicture}[scale=0.6,font=\footnotesize]
        \tikzset{
        solid node/.style={circle,draw,inner sep=1.2,fill=black},
        hollow node/.style={circle,draw,inner sep=1.2}
        }
        
        \node[solid node](1) at (0,3) {};
        \node at (-0.5,3) {$\id$};
        \node[solid node](2) at (0,6) {};
        \node at (-0.5,6.5) {$(23)$};
        \node[solid node](3) at (3,6) {};
        \node at (3,6.5) {$(123)$};
        \node[solid node](4) at (6,3) {};
        \node at (6.5,3.5)  {$(13)$};
        \node[solid node](5) at (6,0) {};
        \node at (6.5,-0.5) {$(132)$};
        \node[solid node](6) at (3,0) {};
        \node at (3,-0.5) {$(12)$};
        
         \path (1) edge (2);
         \path (2) edge (3);
         \path (3) edge (4);
         \path (4) edge (5);
         \path (5) edge (6);
         \path (6) edge (1);
       
    \end{tikzpicture}
\end{center}
\end{ex}

\subsection{The moment map}\label{sec:moment}

The main references for this section are the work of Gel'fand-Serganova \cite{gs} and Kapranov \cite{kap}.

Let 
\begin{equation*}
p:\Fl(n)\hookrightarrow\prod_{r=1}^{n-1}\Gr(r,n)\hookrightarrow\prod_{r=1}^{n-1}\bP^{\binom{n}{r}-1}
\end{equation*}
be the Pl\"{u}cker embedding. Let 
\begin{equation*}
\mu_r:\bP^{\binom{n}{r}-1}\to\bR^{n}
\end{equation*}
be the map defined by
\begin{equation*}
\mu_r([x_I])=\frac{\sum_I|x_I|^2 e_I}{\sum_{I}|x_I|^2}
\end{equation*}
where $I$ ranges over all $r$-element subsets of $[n]$, and $e_I$ is the vector $\sum_{i\in I}z_i\in\bR^n$. Then, the \emph{moment map} $\mu:\Fl(n)\to\bR^{n}$ is the composition of $p$ with the sum of the maps $\mu_r$, after projection from $\prod_{r=1}^{n-1}\bP^{\binom{n}{r}-1}$. 

The key property of $\mu$ is the following. Let $A$ be a non-singular $n\times n$ matrix, let $\cL(A)\in\Fl(n)$ be the associated flag, and let $Z_A$ be the $T$-orbit closure of $\cL(A)$. Then, the image of $Z_A$ under $\mu$ is equal to the flag matroid polytope $\FM(A)$. Moreover, the dimension of $Z_A$ as a subvariety of $\Fl(n)$ is equal to the dimension of $\FM(A)$ as a polytope.

Toric degenerations of $Z_A$ inside $\Fl(n)$ correspond to flag matroidal polyhedral subdivisions of $\FM(A)$. More precisely, consider a 1-parameter toric degeneration of $Z_A$ with irreducible components $Z_1,\ldots,Z_m$ on the special fiber. Then, the $Z_i$ are all reduced, and equal to orbit closures of flags $\cL_i\in\Fl(n)$, whose flag matroid polytopes $\FM(A_i)$ (where $A_i$ is obtained by choosing appropriate bases for the components of the flags $\cL_i$) form a polyhedral subdivision of $\FM(A)$.

\section{The HHMP decomposition}\label{sec:hhmp}

In this section, we review the decomposition of $\Perm(\lambda)$ given by Harada-Horiguchi-Masuda-Park \cite{hhmp}.

\begin{defn}
Fix a sequence $\lambda$ of real numbers $\lambda_1\ge\cdots\ge\lambda_{n}\ge0$. The \emph{Gelfand-Zetlin (GZ) polytope} $\GZ(\lambda)$ is defined as follows. Consider the diagram
\begin{equation*}
    \begin{array}{cccccc}
        \lambda_1&  x_{1,2}    &    x_{1,3}   &     \cdots    & x_{1,n-1}	&x_{1,n}\\
                         &\lambda_2&  x_{2,3}       &     \ddots  &x_{2,n-1} 	&x_{2,n} \\
                         &                 &  \ddots	      &		\ddots    &\vdots	      &\vdots\\
                         &                 &                  &\lambda_{n-2}&x_{n-2,n-1}	 &x_{n-2,n}\\
                         &                 &                  &                      &\lambda_{n-1}&x_{n-1,n}\\
                         &                 &                  &                      &                      &\lambda_n
    \end{array}
\end{equation*}
where we also set $x_{i,i}=\lambda_i$ for $i=1,2,\ldots,n$.

Then, $\GZ(\lambda)\subset\bR^{n(n-1)/2}$ is defined to be the subset of vectors $\vx=(x_{k,\ell})_{1\le k<\ell\le n}$ for which any three variables appearing in the configuration 
\begin{equation*} \label{eq:abc}
    \begin{array}{cc}
        a& b\\
          &c
    \end{array}
\end{equation*}
satisfy $a \ge b \ge c$.
\end{defn}

\begin{defn}
Let $w\in S_{n-1}$ be a permutation and write $\vr=\vr(w)=(r_1,\ldots,r_{n-1})$ for the corresponding vector. Define the face $\GZ(\lambda,w)\subset\GZ(\lambda)$ to be the subset of $\GZ(\lambda)$ of points satisfying the equations 
\begin{equation*}\label{eq: perm eq}
x_{i, i+j} = x_{i, i+j-1} \textup{ for } i \in [1,r_j-1], \quad \textup{ and } \quad 
x_{i, i+j} = x_{i+1, i+j} \textup{ for } i\in [r_j +1, n-j].
\end{equation*}
for all $j=1,2,\ldots,n-1$.
\end{defn}
The key property of $\GZ(\lambda,w)$ is the following. Suppose that the coordinates $x_{i,i+j-1}$ are given for some fixed $j\in[1,n-1]$ and all $i=1,2,\ldots,n-j+1$. Then, all but one of the entries $x_{i,i+j}$ is determined by the above equations; the unique entry which is not is $x_{r_j,r_j+j}$ (this entry exists because $r_j\in[1,n-j]$), which is constrained by the inequality
\begin{equation*}
x_{r_j+1,r_j+j}\le x_{r_j,r_j+j}\le x_{r_j,r_j+j-1}.
\end{equation*}
Because the entries $x_{i,i}=\lambda_i$ are fixed, the dimension of any face $\GZ(\lambda,w)$ is easily seen to be equal to $n-1$.

\begin{ex}\label{face_diagram}
Consider the permutation from Example \ref{first_example}
\begin{equation*}
w=
\begin{bmatrix}
1 & 2 & 3 & 4 & 5 & 6 & 7\\
3 & 7 & 1 & 2 & 5 & 4 & 6
\end{bmatrix}
\in S_7.
\end{equation*}
with $\vr(w)=(3,3,1,3,2,2,1)$. Then, we represent the polytope $\GZ(\lambda,w)$ by the \emph{face diagram} below, where entries $x_{i,j}$ constrained to be equal are connected by an edge.
%\begin{figure}
\begin{center}
    \begin{tikzpicture}[scale=0.6,font=\footnotesize]
        \tikzset{
        solid node/.style={circle,draw,inner sep=1.2,fill=black},
        hollow node/.style={circle,draw,inner sep=1.2}
        }
        \node[solid node](11) at (0,7) {};
        \node[solid node](12) at (1,7) {};
        \node[solid node](13) at (2,7) {};
        \node[solid node](14) at (3,7) {};
        \node[solid node](15) at (4,7) {};
        \node[solid node](16) at (5,7) {};
        \node[solid node](17) at (6,7) {};
        \node[solid node](18) at (7,7) {};
        
        \node[solid node](22) at (1,6) {};
        \node[solid node](23) at (2,6) {};
        \node[solid node](24) at (3,6) {};
        \node[solid node](25) at (4,6) {};
        \node[solid node](26) at (5,6) {};
        \node[solid node](27) at (6,6) {}; 
        \node[solid node](28) at (7,6) {};
        
        \node[solid node](33) at (2,5) {};
        \node[solid node](34) at (3,5) {};
        \node[solid node](35) at (4,5) {};
        \node[solid node](36) at (5,5) {};
        \node[solid node](37) at (6,5) {}; 
        \node[solid node](38) at (7,5) {};
        
        \node[solid node](44) at (3,4) {};
        \node[solid node](45) at (4,4) {};
        \node[solid node](46) at (5,4) {};
        \node[solid node](47) at (6,4) {}; 
        \node[solid node](48) at (7,4) {};
        
        \node[solid node](55) at (4,3) {};
        \node[solid node](56) at (5,3) {};
        \node[solid node](57) at (6,3) {}; 
        \node[solid node](58) at (7,3) {};
        
        \node[solid node](66) at (5,2) {};
        \node[solid node](67) at (6,2) {}; 
        \node[solid node](68) at (7,2) {};
        
        \node[solid node](77) at (6,1) {}; 
        \node[solid node](78) at (7,1) {};
        
        \node[solid node](88) at (7,0) {};
        
         \path (11) edge (12);
         \path (12) edge (13);
         \path (14) edge (15);
         \path (15) edge (16);
         \path (16) edge (17);
         \path (14) edge (15);
         
         \path (22) edge (23);
         \path (23) edge (24);
         \path (25) edge (26);
         
         \path (25) edge (35);
         
          \path (45) edge (55);
          
          \path (36) edge (46);
         \path (46) edge (56);
         \path (56) edge (66);
         
         \path (47) edge (57);
         \path (57) edge (67);
          \path (67) edge (77);
          
           \path (38) edge (48);
         \path (48) edge (58);
         \path (58) edge (68);
         \path (68) edge (78);
         \path (78) edge (88);
        
    \end{tikzpicture}
\end{center}
The coordinates $x_{ii}$ for $i=1,\ldots,8$ are constrained to be equal to $\lambda_i$. For $j=1,\ldots,7$, we see that all coordinates $x_{i,i+j}$ are determined by one of the coordinates $x_{i,i+j-1},x_{i+1,i+j}$ appearing on the diagonal to the immediate southwest, except when $i=r_j$. In the example above, the undetermined entries are $x_{34},x_{35},x_{14},x_{37},x_{27},x_{28},x_{18}$. These $n-1=7$ entries correspond to the dimension of $\GZ(\lambda,w)$.
\end{ex}

\begin{defn}
Define the map $\Phi:\GZ(\lambda)\to\Perm(\lambda)$ as follows. For $\vx\in\GZ(\lambda)$ and $j=0,1,\ldots,n-1$, write
\begin{equation*}
y_j=x_{1,1+j}+\cdots+x_{n-j,n}.
\end{equation*}
Then, define 
\begin{equation*}
\Phi(\vx)=(y_0-y_1,\ldots,y_{n-2}-y_{n-1},y_{n-1}).
\end{equation*}
\end{defn}

\begin{ex}\label{hmmp_n=3}
Take $n=3$ and $\lambda=(2,1,0)$. Then, we have
\begin{align*}
\GZ(\lambda,\id)&=\{(x_{12},x_{23},x_{13})\mid x_{12}\in [1,2], x_{23}=0, x_{13}\in[0,x_{12}]\},\\
\GZ(\lambda,(12))&=\{(x_{12},x_{23},x_{13})\mid x_{12}=2, x_{23}\in[0,1], x_{13}\in[x_{23},2]\}.
\end{align*}
We compute that 
\begin{equation*}
\Phi(\vx)=
\begin{cases}
(3-x_{12},x_{12}-x_{13},x_{13}) &\text{ if }\vx\in \GZ(\lambda,\id),\\
(1-x_{23},2+x_{23}-x_{13},x_{13}) &\text{ if }\vx\in \GZ(\lambda,(12)).
\end{cases}
\end{equation*}
The images of $\GZ(\lambda,\id)$ and $\GZ(\lambda,(12))$ in $\Perm(3)\subset\bR^3$ are thus the sub-polygons where the first coordinate is at least and at most 1, respsectively. These are depicted in the figure below.
\begin{center}
    \begin{tikzpicture}[scale=0.6,font=\footnotesize]
        \tikzset{
        solid node/.style={circle,draw,inner sep=1.2,fill=black},
        hollow node/.style={circle,draw,inner sep=1.2}
        }
        \node[solid node](1) at (0,3) {};
        \node at (-0.5,3) {$\id$};
        \node[solid node](2) at (0,6) {};
        \node at (-0.5,6.5) {$(23)$};
        \node[solid node](3) at (3,6) {};
        \node at (3,6.5) {$(123)$};
        \node[solid node](4) at (6,3) {};
        \node at (6.5,3.5)  {$(13)$};
        \node[solid node](5) at (6,0) {};
        \node at (6.5,-0.5) {$(132)$};
        \node[solid node](6) at (3,0) {};
        \node at (3,-0.5) {$(12)$};
        
         \path (1) edge (2);
         \path (2) edge (3);
         \path (3) edge (4);
         \path (4) edge (5);
         \path (5) edge (6);
         \path (6) edge (1);
         \path (3) edge (6);
         
         \node at (1.5,3.5) {\tiny{$\GZ(\lambda,(12))$}};
         \node at (4.5,2.5) {\tiny{$\GZ(\lambda,\id)$}};
       
    \end{tikzpicture}
\end{center}
\end{ex}

\begin{ex}
Consider the restriction of $\Phi$ to the face $\GZ(\lambda,w)$, where $w$ is as in Example \ref{face_diagram}. We compute
\begin{align*}
\Phi(\vx)=(x_{33}+x_{44}-x_{34},&x_{34}+x_{45}-x_{35},x_{13}+x_{24}-x_{14},x_{36}+x_{47}-x_{37},\\
&x_{26}+x_{37}-x_{27},x_{27}+x_{38}-x_{28},x_{17}+x_{28}-x_{18}).
\end{align*}
Note that the negative term in the $j$-th coordinate of $\Phi(\vx)$ is equal to $x_{r_j,r_j+j}$, the unique entry in the $j$-th diagonal not determined by those in the previous diagonal. Note also that the first coordinate is bounded between $\lambda_4=x_{44}$ and $\lambda_3=x_{33}$, and more generally each coordinate is bounded between its two positive terms.
\end{ex}

\begin{thm}\cite[Proposition 5.2]{hhmp}
The map $\Phi$ is a bijection upon restriction to the union of faces $\GZ(\lambda,w)$, where $w$ ranges over all permutations in $S_{n-1}$.
\end{thm}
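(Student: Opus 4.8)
The plan is to induct on $n$, peeling off the first diagonal of the Gelfand--Zetlin triangle. For $\vx\in\GZ(\lambda)$ write $D^{(j)}(\vx)=(x_{1,1+j},x_{2,2+j},\ldots,x_{n-j,n})$ for its $j$-th diagonal; the inequalities defining $\GZ(\lambda)$ say precisely that each $D^{(j)}$ is weakly decreasing and that $D^{(j-1)}_i\ge D^{(j)}_i\ge D^{(j-1)}_{i+1}$ for all valid $i$. With $y_j$ the sum of the entries of $D^{(j)}$ (so $y_0=\sum\lambda_i$ is fixed), $\Phi(\vx)=(y_0-y_1,\ldots,y_{n-2}-y_{n-1},y_{n-1})$. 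The key local fact is: if $\vx\in\GZ(\lambda,w)$ with $\vr(w)=(r_1,\ldots,r_{n-1})$, then the level-$1$ face equations force $D^{(1)}(\vx)$ to be the sequence obtained from $(\lambda_1,\ldots,\lambda_n)$ by replacing the two entries $\lambda_{r_1},\lambda_{r_1+1}$ by the single entry $c:=x_{r_1,r_1+1}\in[\lambda_{r_1+1},\lambda_{r_1}]$, whence $p_1:=y_0-y_1=\lambda_{r_1}+\lambda_{r_1+1}-c\in[\lambda_{r_1+1},\lambda_{r_1}]$. After the reindexing $x'_{k,\ell}:=x_{k,\ell+1}$, the data of diagonals $1,\ldots,n-1$ of $\vx$ is exactly a point of the smaller polytope $\GZ(D^{(1)})$, lying in its face $\GZ(D^{(1)},w')$ for the $w'\in S_{n-2}$ with $r_i(w')=r_{i+1}$ for $1\le i\le n-2$; the level-$j$ equations of $\GZ(\lambda,w)$ for $j\ge2$ become verbatim the level-$(j-1)$ equations of $\GZ(D^{(1)},w')$, and $w\mapsto(r_1,w')$ is a bijection $S_{n-1}\to\{1,\ldots,n-1\}\times S_{n-2}$. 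Writing $\Phi'$ for the analogous map of $\GZ(D^{(1)})$, one reads off that $\Phi'(\vx')=(p_2,\ldots,p_n)$ when $\Phi(\vx)=(p_1,\ldots,p_n)$.

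The second ingredient is a description of the coordinate slices of the permutohedron: for $t\in[\lambda_n,\lambda_1]$,
\begin{equation*}
\Perm(\lambda)\cap\{p_1=t\}=\{t\}\times\Perm(D(t)),
\end{equation*}
where $D(t)$ is the sequence obtained from $\lambda$ by replacing the consecutive pair $\lambda_r,\lambda_{r+1}$ with $t\in[\lambda_{r+1},\lambda_r]$ by $\lambda_r+\lambda_{r+1}-t$ --- the very sequence $D^{(1)}$ of the previous paragraph when $t=p_1$, and well defined also at the breakpoints $t=\lambda_r$. This I would prove from the rank-inequality model $\Perm(\lambda)=\{\vp:\sum_i p_i=\sum_i\lambda_i,\ \sum_{i\in S}p_i\le\lambda_1+\cdots+\lambda_{|S|}\text{ for all }S\}$: a direct computation shows the $m$-th partial sum of the sorted $D(t)$ to be $\min(\lambda_1+\cdots+\lambda_m,\ \lambda_1+\cdots+\lambda_{m+1}-t)$, which is exactly the bound the inequalities for $\vp\in\Perm(\lambda)$ place on the $m$-element partial sums of $(p_2,\ldots,p_n)$ once $p_1=t$ is fixed.

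The induction then runs as follows; the case $n=1$ is trivial. Put $U:=\bigcup_{w\in S_{n-1}}\GZ(\lambda,w)$. Grouping the faces by $r_1$ and letting $c=x_{r_1,r_1+1}$ range over $[\lambda_{r_1+1},\lambda_{r_1}]$ and $w'$ over $S_{n-2}$, the first paragraph yields
\begin{equation*}
\Phi(U)=\bigcup_{r_1,\,c}\{p_1\}\times\bigcup_{w'\in S_{n-2}}\Phi'\bigl(\GZ(D^{(1)},w')\bigr)=\bigcup_{r_1,\,c}\{p_1\}\times\Perm(D^{(1)})=\bigcup_{t\in[\lambda_n,\lambda_1]}\{t\}\times\Perm(D(t))=\Perm(\lambda),
\end{equation*}
where the second equality is the inductive hypothesis for $D^{(1)}$ and the last is the slice description; this proves surjectivity. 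For injectivity, if $\vx,\tilde{\vx}\in U$ have $\Phi(\vx)=\Phi(\tilde{\vx})=\vp$, then $p_1$ agrees, so $D^{(1)}(\vx)=D(p_1)=D^{(1)}(\tilde{\vx})$ is determined by $p_1$ alone, independently of which faces contain $\vx$ and $\tilde{\vx}$; the remaining data lies in $U$ for $\GZ(D^{(1)})$ with common image $(p_2,\ldots,p_n)$ under $\Phi'$, so agrees by induction, and since the $0$-th diagonal is $\lambda$ in both cases, $\vx=\tilde{\vx}$. Thus $\Phi|_U$ is a bijection onto $\Perm(\lambda)$.

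The main obstacle is establishing the two local facts cleanly. The slice description needs only the standard rank-inequality model of the permutohedron, but the partial-sum identity for $D(t)$ must be matched to the inequalities with care. The face fact is purely combinatorial bookkeeping --- that the level-$1$ equations peel off cleanly, that the reindexed face is \emph{literally} $\GZ(D^{(1)},w')$ with the level shifted down by one, and that $\vr(w)=(r_1,\ldots,r_{n-1})\mapsto(r_1,(r_2,\ldots,r_{n-1}))$ realizes $S_{n-1}\leftrightarrow\{1,\ldots,n-1\}\times S_{n-2}$ --- yet the point that makes injectivity work is that the ``delete a pair, insert a value'' recipe for $D^{(1)}$ depends only on $p_1$ and not on the ambient face, so that preimages still agree when $p_1$ lands on a breakpoint $\lambda_r$.
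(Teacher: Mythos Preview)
Your proposal is correct and follows essentially the same inductive strategy that the paper recalls from \cite{hhmp}. The paper does not give its own proof of this theorem; it merely cites the result and then sketches the construction of the inverse map $\Perm(\lambda)\to\bigcup_w\GZ(\lambda,w)$ by the very same ``peel off the first diagonal'' recursion you use: given $\vz\in\Perm(\lambda)$, choose $r_1$ with $z_1\in[\lambda_{r_1+1},\lambda_{r_1}]$, set $x_{r_1,r_1+1}=\lambda_{r_1}+\lambda_{r_1+1}-z_1$ and the other $x_{i,i+1}$ equal to the adjacent $\lambda$'s, then recurse on $\lambda'=D^{(1)}$. Your write-up supplies the details the paper omits---the slice description $\Perm(\lambda)\cap\{p_1=t\}=\{t\}\times\Perm(D(t))$ and the observation that $D^{(1)}$ depends only on $p_1$ (so injectivity survives the breakpoints)---but the architecture is the same.
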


The images of the $\GZ(\lambda,w)$ in $\Perm(\lambda)$, which we abusively denote $\GZ(\lambda,w)$, therefore give a polyhedral decomposition of $\Perm(\lambda)$, which we refer to as the \emph{HHMP decomposition}.

We recall the construction of the inverse map $\Perm(\lambda)\to\bigcup_{w\in S_{n-1}}\GZ(\lambda,w)$. Suppose that $\vz=(z_1,\ldots,z_n)\in\Perm(\lambda)$. Then, there exists an integer $r_1\in[1,n-1]$ (not necessarily unique) for which $z_1\in[\lambda_{r_1+1},\lambda_{r_1}]$, and thus
\begin{equation*}
\lambda_{r_1+1}\le (\lambda_{r_1+1}+\lambda_{r_1})-z_1 \le \lambda_r.
\end{equation*}
Set $x_{i,i+1}=x_{i,i}=\lambda_i$ for $i\in[1,r_1-1]\cup[r_1+1,n-1]$, and set $x_{i,i+1}=(\lambda_{r_1+1}+\lambda_{r_1})-z_1$. Now, set $\lambda'=(x_{1,2},\ldots,x_{n-1,n})$, and iterate this procedure. Namely, define $x_{i,i+j}$ with $j>1$ inductively via the inverse map $\Perm(\lambda')\to\bigcup_{w'\in S_{n-2}}\GZ(\lambda',w')$.

\section{The toric degeneration}\label{sec:explicit_degen}

We now come to our main construction, giving an explicit toric degeneration of the generic torus orbit closure $Z\subset \Fl(n)$ into a union of special orbit closures $Z_w\subset\Fl(n)$.

Strictly speaking, the construction of this section does strictly less than this; it will only follow from our computations of flag matroid polytopes in the next section that our construction gives the desired degeneration. We more precisely describe a sequence of degenerations of irreducible subschemes of $\Fl(n)$, starting with $Z$. In each degeneration, we will identify two distinct irreducible subschemes of their flat limits, some of which will be ignored. We iterate the construction until reaching the subschemes $Z_w\subset\Fl(n)$. Note that $\dim(Z)=n-1$, and that the dimensions of the limit subschemes can only go down at each step.

We will find in the next section that in fact, the $Z_w$ also have dimension $n-1$, which implies that all of the intermediate subschemes in the sequence of degenerations from $Z$ to $Z_w$ also have dimension $n-1$. Therefore, all of these subschemes are components of the flat limits of the corresponding degenerations. Moreover, we will find, by the existence of the HMMP decomposition, that no components other than the $Z_w$ can appear in the end, which implies that the limit components we identify in each of the intermediate degenerations are the only ones. However, we emphasize that we will not need any of these conclusions in the construction of this section.

\subsection{The first row}
Let 
\begin{equation*}
A=
\begin{bmatrix}
a_{1,1} & \cdots  & a_{1,n}\\
 & \vdots &  \\
 a_{n,1} & \cdots & a_{n,n}
\end{bmatrix}
\end{equation*}
be a generic $n\times n$ matrix, which we identify with its corresponding flag $\cL(A)\in\Fl(n)$. More generally, for $j=0,1,\ldots,n-1$, let 
\begin{equation*}
A^j=
\begin{bmatrix}
0 & \cdots & 0 & a_{1,j+1} & a_{1,j+2} & \cdots & a_{1,n}\\
a_{2,1} & \cdots & a_{2,j} & a_{2,j+1} & a_{2,j+2} & \cdots & a_{2,n}\\
 & &  \vdots & & & & \\
 a_{n,1} & \cdots  & a_{n,j} & a_{n,j+1} & a_{n,j+2} & \cdots & a_{n,n}
\end{bmatrix}
\end{equation*}
be an $n\times n$ matrix which is zero in the left-most $j$ entries of its first row, but generic otherwise. (We have $A^0=A$.)

Fix now $j\in [1,n-1]$, and consider a degeneration in which the $j$-th entry in the first row of $A^{j-1}$ is sent to zero. More precisely, consider the matrix
\begin{equation*}
A^{j-1}_t=
\begin{bmatrix}
0 & \cdots & 0 & t & a_{1,j+1} & a_{1,j+2} & \cdots & a_{1,n}\\
a_{2,1} & \cdots & a_{2,j-1} & a_{2,j} & a_{2,j+1} & a_{2,j+2} & \cdots & a_{2,n}\\
 & & & \vdots & & & & \\
 a_{n,1} & \cdots & a_{n,j-1} & a_{n,j} & a_{n,j+1} & a_{n,j+2} & \cdots & a_{n,n}
\end{bmatrix}
\end{equation*}
whose entries are taken in $\bC[[t]]$, and its corresponding $T$-orbit closure $Z^{j-1}_t$ in $\Fl(n)$. %It is easy to check that, for $j=1,2,\ldots,n-1$, the subscheme $Z^{j-1}_t\subset\Fl(n)$, but in fact, we will not need or use this now.
The family of $T$-orbit closures $Z^{j-1}_t\subset\Fl(n)\times\Spec\bC[[t]]$, where $t\neq 0$, is clearly $T$-equivariant. It is also clear that the $T$-orbit closure $Z^j\subset\Fl(n)$ of $A^{j}$ appears in the flat limit of $Z^{j-1}_t$ as $t\to0$.

Consider now the 1-parameter family of matrices
\begin{equation*}
\begin{bmatrix}
0 & \cdots & 0 & t & a_{1,j+1} & a_{1,j+2} & \cdots & a_{1,n}\\
a_{2,1}t & \cdots & a_{2,j-1}t & a_{2,j}t & a_{2,j+1}t & a_{2,j+2}t & \cdots & a_{2,n}t\\
 & & & \vdots & & & & \\
 a_{n,1}t & \cdots & a_{n,j-1}t & a_{n,j}t & a_{n,j+1}t & a_{n,ij2}t & \cdots & a_{n,n}t
\end{bmatrix}
\in Z^{j-1}_t,
\end{equation*}
obtained by acting by $t\in T$ in the last $n-1$ rows of $A_t^{j-1}$. The limit of the corresponding flags in $\Fl(n)$ is the flag defined by the matrix
\begin{equation*}
A^j_{+}:=
\begin{bmatrix}
0 & \cdots & 0 & 1 & 1 & 0 & \cdots & 0\\
a_{2,1} & \cdots & a_{2,j-1} & a_{2,j} & 0 & a_{2,j+2} & \cdots & a_{2,n}\\
 & & & \vdots & & & \\
 a_{n,1} & \cdots & a_{n,j-1} & a_{n,j} & 0 & a_{n,j+2} & \cdots & a_{n,n}
\end{bmatrix}
.
\end{equation*}
Indeed, the first $j$ columns are obtained by dividing by $t$, the $(j+1)$-th column is obtained by substituting $t=0$ and re-scaling, and the remaining columns are obtained by subtracting the appropriate multiple of the $(j+1)$-th column, and then dividing by $t$. The matrix $A^j_{+}$ is non-singular by the genericity assumption.

Let $Z^j_+$ be the $T$-orbit closure of $A^j_{+}$ in $\Fl(n)$. Then, $Z^j_+$ is also a subscheme of the flat limit of $Z^{j-1}_t$ as $t\to 0$. Therefore, the flat limit of $Z^{j-1}_t$ as $t\to 0$ contains the irreducible subschemes $Z^{j},Z^j_+$. In the case $j=n-1$, we throw out the subscheme $Z^{n-1}$. We may safely do this because there are two torus factors acting trivially on $Z^{n-1}$, namely the factor acting on the first row of $A^{n-1}$ and the factor acting diagonally on the last $n-1$ rows, hence $\dim(Z^{n-1})=n-2$. (In fact, $Z^{n-1}\subset Z^{n-1}_+$.) On the other hand, when $j<n-1$, note that $Z^{j}\neq Z^j_+$, because a general point of $Z^j_+$ has 
\begin{equation*}
\begin{bmatrix}
1 \\
0\\
\vdots \\
0
\end{bmatrix}
\in L_{j+1}
\end{equation*}
but a general point of $Z^j$ does not.

To summarize, we have:
\begin{prop}
For all $j=1,2,\ldots,n-2$, there exists a toric degeneration of the orbit closure $Z^{j-1}\subset \Fl(n)$ of the flag determined by the matrix $A^{j-1}$, whose special fiber contains the distinct subschemes $Z^{j}\subset\Fl(n)$ and $Z^j_+\subset\Fl(n)$. For $j=n-1$, the special fiber contains the subscheme $Z^{n-1}_+\subset\Fl(n)$.
\end{prop}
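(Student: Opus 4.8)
The plan is to verify directly that the family $Z^{j-1}_t$, as $t$ varies, is a flat toric degeneration and then to exhibit the two claimed subschemes in its special fiber. First I would set up the family: since each $Z^{j-1}_t$ is the $T$-orbit closure of the flag $\cL(A^{j-1}_t)$ with entries in $\bC[[t]]$, the total space $\mathcal{Z}\subset\Fl(n)\times\Spec\bC[[t]]$ is the closure of the orbit of this generic point over the punctured disc, so it is irreducible, dominates $\Spec\bC[[t]]$, and hence is flat over the base; this is the standard way such embedded degenerations are produced (cf.\ \S\ref{sec:moment}). The $T$-equivariance is immediate because $T$ acts on $\Fl(n)$ fixing the $\Spec\bC[[t]]$ factor and the construction of $A^{j-1}_t$ is $T$-equivariant in the obvious sense. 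So the real content is identifying subschemes of the special fiber $\mathcal{Z}_0$.

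Second, I would produce $Z^j$ as a subscheme of $\mathcal{Z}_0$. The flag $\cL(A^{j-1}_t)$ itself specializes at $t=0$ to $\cL(A^j)$ (setting $t=0$ in the matrix literally zeroes the $j$-th entry of the first row and leaves the rest generic). Since $\mathcal{Z}$ is $T$-stable and contains $\cL(A^{j-1}_t)$, it contains the $T$-orbit of $\cL(A^j)$ in the fiber over $0$, hence its closure $Z^j$. Third, and this is the heart of the argument, I would produce $Z^j_+$. The point is that the limit of a family of points inside $\mathcal{Z}$ again lies in $\mathcal{Z}$; applying the one-parameter subgroup $t\in T$ to the last $n-1$ rows gives a second family of flags in $Z^{j-1}_t$, and one computes its flat limit in $\Fl(n)$ by the usual column operations (divide the first $j$ columns by $t$, rescale the $(j{+}1)$-st after setting $t=0$, clear the remaining columns against the $(j{+}1)$-st and divide by $t$) to obtain $\cL(A^j_+)$. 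Genericity of the $a_{i,k}$ guarantees $A^j_+$ is nonsingular, so this is a genuine flag lying in $\mathcal{Z}_0$, and hence so is its $T$-orbit closure $Z^j_+$.

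Finally I would check distinctness of $Z^j$ and $Z^j_+$ for $j\le n-2$: the standard basis vector $e_1=(1,0,\ldots,0)^{\mathsf T}$ lies in $L_{j+1}$ for every flag in the $T$-orbit of $\cL(A^j_+)$ (it is a $T$-eigenvector spanning the span of the first $j+1$ columns modulo lower-index coordinates, visibly in $L_{j+1}$ from the matrix $A^j_+$), and this containment is a closed condition preserved in the orbit closure, whereas a general point of $Z^j$ — indeed $\cL(A^j)$ itself, whose $(j{+}1)$-st column is generic — does not satisfy it; so the two orbit closures are distinct. For the boundary case $j=n-1$ one instead observes that the torus factor scaling the first row and the torus factor scaling the last $n-1$ rows both act trivially on $\cL(A^{n-1})$, forcing $\dim Z^{n-1}=n-2<n-1$, and in fact $Z^{n-1}\subset Z^{n-1}_+$, so only $Z^{n-1}_+$ is retained. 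Assembling these observations gives the proposition.

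I expect the main obstacle to be the flat-limit computation producing $A^j_+$: one must argue carefully that the column operations above are valid over $\bC[[t]]$ (i.e.\ that the rescalings and subtractions take place within the $\Fl(n)$-valued family and the limiting flag is genuinely the $T$-orbit closure point, not merely a set-theoretic limit), and that no cancellation caused by the genericity assumptions degenerates $A^j_+$. This is routine linear algebra but is where the construction could go wrong, so I would spell it out explicitly; everything else (flatness, $T$-equivariance, containment of $Z^j$, the dimension count for $j=n-1$) is formal.
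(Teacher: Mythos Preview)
Your proposal is correct and follows essentially the same approach as the paper: the paper also constructs the family $A^{j-1}_t$, observes $Z^j$ appears by setting $t=0$, obtains $Z^j_+$ by acting with the one-parameter subgroup scaling the last $n-1$ rows and computing the limiting flag via exactly the column operations you describe, and distinguishes $Z^j$ from $Z^j_+$ for $j\le n-2$ by the condition $e_1\in L_{j+1}$, while discarding $Z^{n-1}$ via the same two-torus-factors dimension count. Your write-up is slightly more explicit about flatness of the total family, but otherwise the arguments coincide.
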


Thus, starting from the generic torus orbit closure $Z=Z^0$, we perform $n-1$ degenerations in total, obtaining in the end the collection of subschemes $Z^1_+,\ldots,Z^{n-1}_+$. As explained above, it will turn out that the $Z^j_+$ all have dimension $n-1$, which implies a posteriori that they appear with multiplicity 1 as components of the flat limits of their corresponding degenerations. It will furthermore turn out that we have identified all of the components of the flat limits of the degenerations, so in particular that $[Z]$ is equal to the sum of the classes $[Z^j_+]$. However, we will continue not to assume this in what follows.

\subsection{Iterating over rows}
For $j=1,2,\ldots,n-1$, we repeat the degeneration on the matrix $A_j^{+}$. More precisely, consider the $(n-1)\times (n-1)$ matrix obtained by ignoring the first row and $(j+1)$-st column of $A_j^{+}$. Then, we send, from left to right, the entries in the top row of this new matrix to zero, extracting two subschemes of the flat limit, except in the $(n-2)$-nd step, in which case we only extract one. We continue this procedure until reaching the last row. 

We now describe the matrices whose orbit closures are reached at the end of this process. Let $w\in S_{n-1}$ be a permutation. 
\begin{defn}
Define the matrix $A_w$ in the following steps. 
\begin{enumerate}
\item For $i=1,2,\ldots,n-1$, do the following:
\begin{itemize}
\item Place the symbol $\star_2$ in row $i$ and column $w^{-1}(i)+1$. 
\item Place the symbol $\star_1$ in row $i$ and the right-most column that lies to the left of column $w^{-1}(i)+1$, but does not contain the symbol $\star_2$ in any row above row $i$. Such a column exists, because the symbol $\star_2$ is never placed in the first column of $A_w$ for $i=1,2,\ldots,n-1$.
\end{itemize}
\item Place the symbol $\star_2$ in row $n$ and column 1. 
\item Place a 0 in all remaining entries of $A_w$. 
\item Replace all symbols $\star_1,\star_2$ in $A_w$ with generic complex numbers. 
\end{enumerate}
\end{defn}

\begin{ex}\label{end_matrix}
Let
\begin{equation*}
w=
\begin{bmatrix}
1 & 2 & 3 & 4 & 5 & 6 & 7\\
3 & 7 & 1 & 2 & 5 & 4 & 6
\end{bmatrix}
,w^{-1}=
\begin{bmatrix}
1 & 2 & 3 & 4 & 5 & 6 & 7\\
3 & 4 & 1 & 6 & 5 & 7 & 2
\end{bmatrix}
.
\end{equation*}
Then, we obtain the below matrix $A_w$, where on the right the symbols $*$ denote generic (and distinct) complex numbers.
\begin{equation*}
\begin{bmatrix}
0 & 0 & \star_1 & \star_2 & 0 & 0 & 0 & 0 \\
0 & 0 & \star_1 & 0 & \star_2 & 0 & 0 & 0 \\
\star_1 & \star_2 & 0 & 0 & 0 & 0 & 0 & 0 \\
0 & 0 & 0 & 0 & 0 & \star_1 & \star_2 & 0 \\
0 & 0 & \star_1 & 0 & 0 & \star_2 & 0 & 0\\
0 & 0 & \star_1 & 0 & 0 & 0 & 0 & \star_2 \\
\star_1 & 0 & \star_2 & 0 & 0 & 0 & 0 & 0\\
\star_2 & 0 & 0 & 0 & 0 & 0 & 0 & 0
\end{bmatrix}
\rightarrow
\begin{bmatrix}
0 & 0 & * & * & 0 & 0 & 0 & 0 \\
0 & 0 & * & 0 & * & 0 & 0 & 0 \\
* & * & 0 & 0 & 0 & 0 & 0 & 0 \\
0 & 0 & 0 & 0 & 0 & * & * & 0 \\
0 & 0 & * & 0 & 0 & * & 0 & 0\\
0 & 0 & * & 0 & 0 & 0 & 0 & * \\
* & 0 & * & 0 & 0 & 0 & 0 & 0\\
* & 0 & 0 & 0 & 0 & 0 & 0 & 0
\end{bmatrix}
\end{equation*}
\end{ex}

Observe that the matrix $A_w$ is invertible, if all entries corresponding to the symbol $\star_2$ are non-zero. Indeed, the columns of $A_w$ may be permuted so that the entries $\star_2$ appear on the diagonal and the resulting matrix is upper triangular. We may therefore define:
\begin{defn}
Let $\cL_w:=\cL(A_w)\in\Fl(n)$ be the flag associated to $A_w$, and let $Z_w:=\overline{T\cdot\cL_w}\subset\Fl(n)$ be the corresponding orbit closure.
\end{defn}

We have proven the following:

\begin{prop}
There exists a sequence of toric degenerations of $Z$ whose special fiber in the final step contains the orbit closures $Z_w$, where $w$ ranges over all permutations in $S_{n-1}$.
\end{prop}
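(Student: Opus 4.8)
The plan is to prove this by induction on $n$, treating the construction of \S\ref{sec:explicit_degen} as a recursive procedure whose first round clears the top row and whose subsequent rounds are a lower-dimensional instance of the same construction. The cases $n=1,2$ are immediate, since then $S_{n-1}$ is a point, $Z$ is already the unique orbit closure $Z_w$, and nothing needs to be done. For the inductive step, observe first that the first Proposition of \S\ref{sec:explicit_degen}, iterated from $Z=Z^0$, produces as subschemes of the successive special fibers the orbit closures $Z^1_+,\dots,Z^{n-1}_+$ of the matrices $A^1_+,\dots,A^{n-1}_+$; the matrix $A^j_+$ has first row $(0,\dots,0,1,1,0,\dots,0)$ with the two $1$'s in columns $j$ and $j+1$, a zero in position $j+1$ of every lower row, and generic entries elsewhere. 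Deleting the first row and the $(j+1)$-st column of $A^j_+$ therefore leaves a generic $(n-1)\times(n-1)$ matrix $B^{(j)}$.

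The crucial step is a lifting statement: every toric degeneration arising from running the construction of \S\ref{sec:explicit_degen} in dimension $n-1$ on $B^{(j)}$ extends to a toric degeneration of $Z^j_+\subset\Fl(n)$ whose special fiber contains the orbit closure of the matrix obtained from the corresponding $\Fl(n-1)$ limit matrix by re-inserting the frozen first row and $(j+1)$-st column of $A^j_+$. I would prove this exactly as in \S\ref{sec:explicit_degen}: the one-parameter families of matrices, the torus actions on the lower rows, and the elementary column operations used there involve neither the first row (whose support is confined to columns $j,j+1$, and which is uniformly rescaled together with everything else when the flat limit is extracted) nor the zero column in position $j+1$ of the lower block, so they lift entry by entry, and the argument identifying the branched-off limit component carries over word for word. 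Iterating the lifting statement over all rows, the entire branching system of degenerations of $B^{(j)}$ supplied by the inductive hypothesis lifts to one of $Z^j_+$, hence ultimately to a branching system of degenerations of $Z$.

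It remains to check that the matrices at the bottom of the recursion are, as promised, the $A_w$. By the inductive hypothesis the $\Fl(n-1)$ construction applied to $B^{(j)}$ terminates in the matrices $A_{w'}$, $w'\in S_{n-2}$. Re-inserting the frozen first row and $(j+1)$-st column and relabeling the columns other than $j+1$ as $1,\dots,n-1$, one verifies that the resulting $n\times n$ matrix is precisely $A_w$ --- up to rescaling the columns $j$ and $j+1$, which alters neither the flag nor its $T$-orbit closure, and up to the choice of generic parameters in the definition of $A_w$, which the degeneration realizes in sufficiently general position --- where $w\in S_{n-1}$ is the permutation with $w^{-1}(1)=j$ whose deletion at the input $j$ and the output $1$ is $w'$. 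The verification is a direct comparison of the $\star_1/\star_2$-placement recipe defining $A_w$ with its restriction to rows $2,\dots,n$ and columns $\ne j+1$: row $1$ receives its $\star_1,\star_2$ in columns $j,j+1$ because no rows lie above it, the entry in row $n$ and column $1$ is left alone by the recursion, and the clause ``the right-most column that does not contain the symbol $\star_2$ in any row above'' records exactly the set of $\star_2$-columns already used by the rows cleared so far, a set that is stable under deleting the single column $j+1$. Since $w\mapsto(w^{-1}(1),w')$ is a bijection $S_{n-1}\xrightarrow{\sim}[1,n-1]\times S_{n-2}$, all of the $Z_w$ are produced, and the induction closes.

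I expect the main difficulty to be the combinatorial bookkeeping rather than the geometry: one must pin down the lifting statement so that the reinserted limit matrices genuinely appear (with no spurious coincidence forced by the frozen entries, and with the $T$-orbit closures unchanged), and one must check that iterating the $\star_1/\star_2$ recipe agrees exactly with the definition of $A_w$, including the row-$n$/column-$1$ entry and the ``rightmost unused column'' clause. Once the lifting statement is set up, the remaining geometry is already contained in the computations of \S\ref{sec:explicit_degen}.
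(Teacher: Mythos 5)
Your proposal is correct and follows essentially the same route as the paper: the paper's argument is precisely the construction of \S\ref{sec:explicit_degen}, which clears the first row to produce the $Z^j_+$ and then ``repeats the degeneration'' on the $(n-1)\times(n-1)$ matrix obtained by deleting the frozen row and column --- exactly the recursion you formalize, with your compatibility check of the $\star_1/\star_2$ recipe under deletion appearing in the paper as the first claim of Lemma \ref{rank_lemma_w'}. Your only addition is to make explicit the lifting statement and the induction on $n$ that the paper leaves implicit, which is a presentational rather than a mathematical difference.
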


We still do not claim that $Z_w$ appear as \emph{components} of the special fibers, nor that no other components appear. This is proven in the next section.

\begin{ex}\label{n=3_matrices}
For $n=3$, we have
\begin{equation*}
A_{\id}=
\begin{bmatrix}
* & * & 0 \\
*& 0 & *\\
* & 0 & 0
\end{bmatrix}
,
A_{(12)}=
\begin{bmatrix}
0 & * & * \\
* & * & 0\\
* & 0 & 0
\end{bmatrix}
.
\end{equation*}
\end{ex}

\section{GZ faces as flag matroid polytopes}\label{sec:fmp}

To the orbit closures $Z_w\subset \Fl(n)$ appearing in the special fibers of the degenerations described in the previous section, we may associate flag matroid polytopes $\FM(\lambda,w)$. The purpose of this section is to identify these flag matroid polytopes with the polytopes $\GZ(\lambda,w)$ appearing in the HMMP decomposition.

\begin{defn}
Define $\FM(\lambda,w)=\FM(\lambda,A_w)$ to be the flag matroid polytope associated to $A_w$.
\end{defn}

\begin{ex}\label{FM_GZ_3}
Take $n=3$ and $\lambda=(2,1,0)$. We compute $\FM(\lambda,w)\subset\Perm(3)$ for $w=\id,(12)$, using the matrices $A_w$ found in Example \ref{n=3_matrices}. 

When $w=\id$, Then, the only non-trivial rank condition coming from $A_w$ is $z_2+z_3\le 1+1=2$, and because $z_1+z_2+z_3=3$, this is equivalent to $z_1\ge 1$. Comparing to Example \ref{hmmp_n=3}, we find that $\FM(\lambda,\id)=GZ(\lambda,\id)$. 

Similarly, when $w=(12)$, the only non-trivial rank condition is $z_1\le 1$, which gives $\FM(\lambda,(12))=GZ(\lambda,(12))$.
\end{ex}

To show that $\FM(\lambda,w)=\GZ(\lambda,w)$ in general, we will induct on $n$, considering the rank inequalities obtained by restricting to submatrices of $A_w$. We will first need two lemmas.

\begin{lem}\label{rank_lemma_S+}
Let $w\in S_{n-1}$ be a permutation, and let $S\subset\{2,3,\ldots,n\}$ be any subset. Write $S^+=\{1\}\cup S$.

Then, we have
\begin{equation*}
\rk((A_w)_{S^+,j})=
\begin{cases}
\rk((A_w)_{S,j}) &\text{ if } j\le r_1-1,\\
\rk((A_w)_{S,r_1-1})+1  &\text{ if } j=r_1,\\
\rk((A_w)_{S,j})+1 &\text{ if } j\ge r_1+1.
\end{cases}
\end{equation*}
%\begin{itemize}
%\item $\rk((A_w)_{S^+,j})=\rk((A_w)_{S,j})=\rk((A_{w'})_{S,j})$ if $j\le r_1-1$,
%\item $\rk((A_w)_{S^+,r_1})=\rk((A_w)_{S,r_1-1})+1=\rk((A_{w'})_{S,r_1-1})+1$, and
%\item $\rk((A_w)_{S^+,j})=\rk((A_w)_{S,j})+1=\rk((A_{w'})_{S,j-1})+1$ if $j\ge r_1+1$.
%\end{itemize}
\end{lem}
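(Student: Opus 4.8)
The plan is to read off the first row of $A_w$ explicitly and then reduce everything to elementary row and column operations. By construction the entry $\star_2$ of row $1$ lies in column $w^{-1}(1)+1$; since $w^{-1}$ restricts to a bijection of $[n-1]$, the definition of $r_1=r_1(w)$ gives $w^{-1}(1)=r_1$, so $\star_2$ lies in column $r_1+1$, and (row $1$ being the topmost row, so that no $\star_2$ sits above it) the entry $\star_1$ of row $1$ lies in column $r_1$. Hence row $1$ of $A_w$ is zero outside of columns $r_1$ and $r_1+1$. The second structural input, which I expect to be the real content of the proof, is that \emph{column $r_1+1$ of $A_w$ has its unique nonzero entry in row $1$}: a $\star_2$ lands in column $r_1+1$ only for a row $i\le n-1$ with $w^{-1}(i)+1=w^{-1}(1)+1$, i.e.\ $i=1$ (and the $\star_2$ of row $n$ is in column $1$), while a $\star_1$ in a row $i>1$ is by construction placed in a column carrying no $\star_2$ in any row above $i$, so it is never placed in column $r_1+1$, which already holds the $\star_2$ of row $1$.

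With these two facts in hand, the three cases are short. For $j\le r_1-1$, row $1$ restricted to columns $1,\ldots,j$ is the zero row, so adjoining it gives $\rk((A_w)_{S^+,j})=\rk((A_w)_{S,j})$. For $j=r_1$, moving row $1$ to the bottom puts $(A_w)_{S^+,r_1}$ in the block shape $\left(\begin{smallmatrix}(A_w)_{S,r_1-1}&v\\ 0&\star_1\end{smallmatrix}\right)$ with $\star_1\neq 0$; pivoting on $\star_1$ clears $v$ and, since the bottom row vanishes in columns $1,\ldots,r_1-1$, leaves $(A_w)_{S,r_1-1}$ untouched, so the rank is $\rk((A_w)_{S,r_1-1})+1$. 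For $j\ge r_1+1$, column $r_1+1$ of $(A_w)_{S^+,j}$ is nonzero only in row $1$; permuting it to the last column puts the matrix in the shape $\left(\begin{smallmatrix}B&0\\ r&\star_2\end{smallmatrix}\right)$ with $\star_2\neq 0$, where $B$ is $(A_w)_{S,j}$ with its identically zero column $r_1+1$ deleted, so $\rk(B)=\rk((A_w)_{S,j})$ and the rank is $\rk((A_w)_{S,j})+1$.

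All of these row and column operations are valid over $\bC$ using only that the $\star$-entries are nonzero, so no genericity beyond that is needed. The one point requiring care — and the main obstacle — is the case $j=r_1$: one must check that adjoining \emph{both} a new row and a new column raises the rank by exactly $1$ rather than $2$, which works precisely because the new row contributes nothing over the first $r_1-1$ columns, so the only new rank can come from the pivot $\star_1$ in column $r_1$. The other genuinely combinatorial ingredient is the isolation of column $r_1+1$, which is where the placement rules for $\star_1$ and $\star_2$ actually get used; I would isolate this as a standalone structural observation about $A_w$ (first row zero except for an adjacent $\star_1\star_2$ pair in columns $r_1,r_1+1$, and column $r_1+1$ supported on row $1$), since it will presumably be reused in the inductive identification of $\FM(\lambda,w)$ with $\GZ(\lambda,w)$.
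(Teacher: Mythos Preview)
Your proof is correct and follows essentially the same route as the paper's: both arguments identify that row $1$ of $A_w$ is zero in columns $1,\ldots,r_1-1$ and that column $r_1+1$ is supported only on row $1$, and then read off the three rank statements from these facts. Your version is more explicit --- you spell out why $w^{-1}(1)=r_1$ and why no $\star_1$ or $\star_2$ from rows $i>1$ can land in column $r_1+1$, and you phrase the rank computations via block forms and pivots --- whereas the paper simply asserts ``adding a row/column with a nonzero entry in a previously zero position increases the rank by $1$''; but the content is the same.
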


\begin{proof}
If $j\le r_1-1$, then $(A_w)_{S^+,j}$ is obtained from $(A_w)_{S,j}$ by adding a zero row, so the claim is obvious in this case. In the case $j=r_1$, the matrix $(A_w)_{S^+,r_1}$ is obtained from $(A_w)_{S,r_1-1}$ by first adding a zero row, and then adding a column with non-zero coordinate in the first row, which was previously zero, so the rank increases by 1. Finally, if $j\ge r_1+1$, then $(A_w)_{S^+,j}$ is obtained from $(A_w)_{S,j}$ by adding a row with non-zero coordinate in the $(r_1+1)$-st column, which was previously zero, so the rank increases by 1.
\end{proof}

\begin{lem}\label{rank_lemma_w'}
Let $w\in S_{n-1}$ be a permutation, and let $\vr=(r_1,\ldots,r_{n-1})$ be the corresponding integer vector. Let $w'\in S_{n-2}$ be the permutation corresponding to $\vr'=(r_2,\ldots,r_{n-1})$. Then, the matrix $A_{w'}$ is obtained by deleting the first row and the $(r_1+1)$-st column of $A_w$.

Furthermore, let $S\subset\{2,3,\ldots,n\}$ be any subset. Then, we have
\begin{equation*}
\rk((A_{w'})_{S,j})=
\begin{cases}
\rk((A_w)_{S,j}) &\text{ if } j\le r_1,\\
\rk((A_w)_{S,j+1}) &\text{ if } j\ge r_1.
\end{cases}
\end{equation*}
\end{lem}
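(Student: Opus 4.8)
The plan is to prove the two assertions in order; the identification of $A_{w'}$ with the submatrix of $A_w$ obtained by deleting row $1$ and column $r_1+1$ is the substantive part, and the rank formula will follow almost formally. I would begin by recording three structural facts about $A_w$. (a) Since $w^{-1}(1),\ldots,w^{-1}(n-1)$ is a permutation of $[n-1]$, the value $w^{-1}(1)$ is the $r_1$-th smallest among them, so $r_1=w^{-1}(1)$; consequently the $\star_2$ of row $1$ of $A_w$ sits in column $r_1+1$ and the $\star_1$ of row $1$ in column $r_1$. (b) Column $r_1+1$ of $A_w$ vanishes outside row $1$: for $i\ge 2$ the $\star_2$ of row $i$ is in column $w^{-1}(i)+1\ne w^{-1}(1)+1$ and row $n$ feeds only column $1$, so the only $\star_2$ in column $r_1+1$ is the one in row $1$; and no $\star_1$ lands there, since the column $w^{-1}(1)+1$ is forbidden (already occupied by a $\star_2$ above) for every row $i\ge 2$. (c) The vector $(r_2,\ldots,r_{n-1})$ is obtained from $(r_1,\ldots,r_{n-1})$ by deleting the leading entry, equivalently the Lehmer code of $(w')^{-1}$ is the Lehmer code of $w^{-1}$ with its first entry removed; this means $(w')^{-1}$ is the standardization of the word $w^{-1}(2)\,w^{-1}(3)\cdots w^{-1}(n-1)$, so that for $2\le i\le n-1$
\begin{equation*}
(w')^{-1}(i-1)=\begin{cases} w^{-1}(i) & \text{if } w^{-1}(i)<r_1,\\ w^{-1}(i)-1 & \text{if } w^{-1}(i)>r_1.\end{cases}
\end{equation*}

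For the first assertion, let $\phi\colon\{1,\ldots,n\}\setminus\{r_1+1\}\to\{1,\ldots,n-1\}$ be the order isomorphism that is the identity on $[1,r_1]$ and subtracts $1$ on $[r_1+2,n]$; it records the column relabeling induced by deleting column $r_1+1$. For each $i\ge 2$, the $\star_2$ of row $i$ of $A_w$ is in column $w^{-1}(i)+1\ne r_1+1$, and by (c) we get $\phi(w^{-1}(i)+1)=(w')^{-1}(i-1)+1$ in both cases of the formula, which is precisely the column prescribed for the $\star_2$ of row $i-1$ of $A_{w'}$; similarly the $\star_2$ of row $n$, in column $1$, maps to column $1$ of the last row. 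For the $\star_1$'s, observe that in $A_w$ the $\star_1$ of row $i\ge 2$ is the largest element of the set
\begin{equation*}
T_i:=\{1,\ldots,w^{-1}(i)\}\setminus\{w^{-1}(k)+1 : 1\le k\le i-1\},
\end{equation*}
which does not contain $r_1+1$; applying the monotone bijection $\phi$ carries $T_i$ onto the corresponding set $\{1,\ldots,(w')^{-1}(i-1)\}\setminus\{(w')^{-1}(k')+1 : 1\le k'\le i-2\}$ for row $i-1$ of $A_{w'}$ (again by (c), which identifies both the upper bound and the forbidden columns), and since $\phi$ is increasing it sends the largest element to the largest element. Hence every symbol of $A_w$, after deleting row $1$ and column $r_1+1$, lands exactly where $A_{w'}$ places its symbols, proving the first assertion.

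The rank formula is then immediate. By the first assertion, the columns of $A_{w'}$ read from the left are the columns of $A_w$ in order with column $r_1+1$ omitted. If $j\le r_1$ the first $j$ columns of $A_{w'}$ are literally the first $j$ columns of $A_w$, so $(A_{w'})_{S,j}=(A_w)_{S,j}$. If $j\ge r_1$ the first $j$ columns of $A_{w'}$ are columns $\{1,\ldots,j+1\}\setminus\{r_1+1\}$ of $A_w$, so $(A_{w'})_{S,j}$ is obtained from $(A_w)_{S,j+1}$ by removing column $r_1+1$; since $S\subseteq\{2,\ldots,n\}$, fact (b) says that column is zero on the rows of $S$, so the rank is unchanged, giving $\rk((A_{w'})_{S,j})=\rk((A_w)_{S,j+1})$. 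The two expressions agree at $j=r_1$ because (b) gives $\rk((A_w)_{S,r_1})=\rk((A_w)_{S,r_1+1})$. The one genuine obstacle is the bookkeeping in the first assertion — matching the $\star_1$ and $\star_2$ placements of $A_{w'}$ against those inherited from $A_w$ — and I expect the cleanest way through it to be exactly the device above: route everything through the single order isomorphism $\phi$ and the standardization description of $(w')^{-1}$, after which the rank statement costs nothing.
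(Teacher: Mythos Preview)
Your proof is correct and follows the same approach as the paper. The paper dispatches the first assertion in a single sentence (``immediate by construction, because the integers $r_j$ determine the relative positions of the symbols $\star_2$ in either matrix, which in turn determine the positions of the symbols $\star_1$''), and your argument via the order isomorphism $\phi$ and the standardization description of $(w')^{-1}$ is exactly a careful unpacking of that sentence; the rank formula argument is then identical to the paper's.
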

Our convention in Lemma \ref{rank_lemma_w'} is that we index the rows of $A_{w'}$ by $2,3,\ldots,n$, but shift the indices on the columns to the right of the deleted one so that they are labelled $1,2,\ldots,n-1$. Note that the case $j=r_1$ is covered by both cases above, and the claim is that
\begin{equation*}
\rk((A_{w'})_{S,r_1})=\rk((A_w)_{S,r_1})=\rk((A_w)_{S,r_1+1}).
\end{equation*}
The equality $\rk((A_w)_{S,r_1})=\rk((A_w)_{S,r_1+1})$ is due to the fact that the $(r_1+1)$-st column of $(A_w)_{S,r_1+1}$ is zero.

\begin{ex}
Let
\begin{equation*}
w=
\begin{bmatrix}
1 & 2 & 3 & 4 & 5 & 6 & 7\\
3 & 7 & 1 & 2 & 5 & 4 & 6
\end{bmatrix}
,w^{-1}=
\begin{bmatrix}
1 & 2 & 3 & 4 & 5 & 6 & 7\\
3 & 4 & 1 & 6 & 5 & 7 & 2
\end{bmatrix}
,
\end{equation*}
so that $\vr=(r_1,\ldots,r_{n-1})=(3,3,1,3,2,2,1)$ and $\vr'=(3,1,3,2,2,1)$. Then, we have
\begin{equation*}
w'=
\begin{bmatrix}
1 & 2 & 3 & 4 & 5 & 6 \\
2 & 6 & 1 & 4 & 3 & 5
\end{bmatrix}
,
(w')^{-1}=
\begin{bmatrix}
1 & 2 & 3 & 4 & 5 & 6 \\
3 & 1 & 5 & 4 & 6 & 2
\end{bmatrix}
\end{equation*}
and
\begin{equation*}
A_w=
\begin{bmatrix}
0 & 0 & * & * & 0 & 0 & 0 & 0 \\
0 & 0 & * & 0 & * & 0 & 0 & 0 \\
* & * & 0 & 0 & 0 & 0 & 0 & 0 \\
0 & 0 & 0 & 0 & 0 & * & * & 0 \\
0 & 0 & * & 0 & 0 & * & 0 & 0\\
0 & 0 & * & 0 & 0 & 0 & 0 & * \\
* & 0 & * & 0 & 0 & 0 & 0 & 0\\
* & 0 & 0 & 0 & 0 & 0 & 0 & 0
\end{bmatrix}
,
A_{w'}=
\begin{bmatrix}
0 & 0 & *  & * & 0 & 0 & 0 \\
* & * & 0  & 0 & 0 & 0 & 0 \\
0 & 0 & 0 & 0 & * & * & 0 \\
0 & 0 & *  & 0 & * & 0 & 0\\
0 & 0 & *  & 0 & 0 & 0 & * \\
* & 0 & *  & 0 & 0 & 0 & 0\\
* & 0 & 0 & 0 & 0 & 0 & 0
\end{bmatrix}
\end{equation*}
We see that $A_{w'}$ is obtained by deleting the first row and 4th column of $A_w$. The rows of $A_{w'}$ are indexed $2,3,\ldots,8$, which correspond to the same-numbered rows of  of $A_{w}$, with the 4th coordinate deleted. The columns of $A_{w'}$ are indexed $1,2,\ldots,7$; the first three columns are the first three columns of $A_w$, with the first coordinate deleted, and columns $4,5,6,7$ of $A_{w'}$ are columns $5,6,7,8$ of $A_w$, with the first coordinate deleted.
\end{ex}

\begin{proof}[Proof of Lemma \ref{rank_lemma_w'}]
That $A_{w'}$ is obtained from $A_w$ by deleting the first row and $(r_1+1)$-st column is immediate by construction, because the integers $r_j$ determine the relative positions of the symbols $\star_2$ in either matrix, which in turn determine the positions of the symbols $\star_1$.

If $j\le r_1$, then $(A_{w'})_{S,j}=(A_w)_{S,j}$, so the ranks of the two matrices are equal. If $j\ge r_1+1$, then $(A_{w'})_{S,j}$ is obtained from $(A_w)_{S,j+1}$ by deleting a zero row, so the ranks of these two matrices are equal.
\end{proof}

\begin{thm}\label{FM=GZ}
For any $\lambda,w$, we have $\FM(\lambda,w)=\GZ(\lambda,w)$.
\end{thm}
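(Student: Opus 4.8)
The plan is to prove the identity by induction on $n$, exhibiting for both sides the same recursion that ``peels off'' the first coordinate, and then invoking the inductive hypothesis for the truncated data. The base case $n\le 2$ is immediate: there $S_{n-1}$ is trivial, the face equations are vacuous, and both $\GZ(\lambda,w)$ and $\FM(\lambda,w)$ equal the full weighted permutohedron $\Perm(\lambda)$.

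For the inductive step, write $\vr=\vr(w)=(r_1,\dots,r_{n-1})$, let $w'\in S_{n-2}$ be the permutation with $\vr(w')=(r_2,\dots,r_{n-1})$ (as in Lemma~\ref{rank_lemma_w'}), and for $\lambda_{r_1+1}\le c\le\lambda_{r_1}$ put
\begin{equation*}
\lambda'(c)=(\lambda_1,\dots,\lambda_{r_1-1},\ \lambda_{r_1}+\lambda_{r_1+1}-c,\ \lambda_{r_1+2},\dots,\lambda_n),
\end{equation*}
a weakly decreasing nonnegative sequence of length $n-1$. The recursion for the HHMP side is essentially already in the discussion preceding the theorem in \S\ref{sec:hhmp}: in $\vx\in\GZ(\lambda,w)$ the face equations for $j=1$ fix $x_{i,i+1}=\lambda_i$ for $i<r_1$ and $x_{i,i+1}=\lambda_{i+1}$ for $i>r_1$, leaving $x_{r_1,r_1+1}$ free in $[\lambda_{r_1+1},\lambda_{r_1}]$, and the remaining coordinates form a point of $\GZ(\lambda',w')$ with $\lambda'=(x_{1,2},\dots,x_{n-1,n})$. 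Tracking this through $\Phi$ — whose first coordinate on $\GZ(\lambda,w)$ is $z_1=\lambda_{r_1}+\lambda_{r_1+1}-x_{r_1,r_1+1}$, and whose remaining coordinates are those of the analogous map for the $(n-1)$-entry sequence $\lambda'$ — gives
\begin{equation*}
\GZ(\lambda,w)=\bigl\{(z_1,\dots,z_n)\ :\ \lambda_{r_1+1}\le z_1\le\lambda_{r_1},\ (z_2,\dots,z_n)\in\GZ(\lambda'(z_1),w')\bigr\}.
\end{equation*}

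The heart of the proof is to establish the same recursion for the flag matroid polytope,
\begin{equation*}
\FM(\lambda,w)=\bigl\{(z_1,\dots,z_n)\ :\ \lambda_{r_1+1}\le z_1\le\lambda_{r_1},\ (z_2,\dots,z_n)\in\FM(\lambda'(z_1),w')\bigr\}.
\end{equation*}
The equation $\sum_i z_i=\sum_i\lambda_i$ matches the one for $\FM(\lambda'(z_1),w')$ because the entries of $\lambda'(z_1)$ sum to $\sum_i\lambda_i-z_1$, so it remains to match the rank inequalities, which I would write in the form $z_S\le\sum_j\bigl(\rk((A_w)_{S,j})-\rk((A_w)_{S,j-1})\bigr)\lambda_j$; call this the $A_w$-inequality for $S$. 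Split the subsets $S\subseteq[n]$ into those with $1\notin S$ and those of the form $\{1\}\cup T$ with $T\subseteq\{2,\dots,n\}$. For $T\subseteq\{2,\dots,n\}$, Lemma~\ref{rank_lemma_w'} gives that the $A_w$-inequality for $T$ coincides with the $T$-inequality for $\FM(\lambda'(z_1),w')$ when the rank of $(A_w)_{T,\bullet}$ does not jump at column $r_1$, and is implied by it together with $z_1\ge\lambda_{r_1+1}$ otherwise; dually, Lemma~\ref{rank_lemma_S+} gives that the $A_w$-inequality for $\{1\}\cup T$ coincides with the $T$-inequality for $\FM(\lambda'(z_1),w')$ when that rank does jump at column $r_1$, and is implied by it together with $z_1\le\lambda_{r_1}$ otherwise. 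The two bounds on $z_1$ are themselves $A_w$-inequalities: $z_1\le\lambda_{r_1}$ is the one for $S=\{1\}$, since the first row of $A_w$ is supported in columns $r_1$ and $r_1+1$; and $z_1\ge\lambda_{r_1+1}$ is the one for $S=\{2,\dots,n\}$, since column $r_1+1$ of $A_w$ is supported only in the first row, so that inequality reads $z_{\{2,\dots,n\}}\le\sum_i\lambda_i-\lambda_{r_1+1}$. Collecting these comparisons, the full system of $A_w$-inequalities is equivalent to the full system of inequalities cutting out $\FM(\lambda'(z_1),w')$ together with $\lambda_{r_1+1}\le z_1\le\lambda_{r_1}$, which is the asserted recursion.

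Granting both recursions, the theorem follows: for each fixed $z_1\in[\lambda_{r_1+1},\lambda_{r_1}]$ the inductive hypothesis for $w'$ and the weight sequence $\lambda'(z_1)$ gives $\FM(\lambda'(z_1),w')=\GZ(\lambda'(z_1),w')$, and then the two displayed descriptions agree. I expect the main obstacle to lie in the bookkeeping of the previous paragraph: one must check that \emph{every} $A_w$-inequality has been accounted for — in particular that the seemingly ``extra'' ones (those for $T\not\ni 1$ whose rank jumps at column $r_1$, and those for $\{1\}\cup T$ whose rank does not) are redundant given the inequalities defining $\FM(\lambda'(z_1),w')$ and the two bounds on $z_1$ — while keeping straight the column relabeling of $A_{w'}$ used in Lemma~\ref{rank_lemma_w'}. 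The two rank lemmas are tailored precisely so that this reduces to a finite, somewhat intricate computation rather than anything conceptual.
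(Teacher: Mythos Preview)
Your proposal is correct and follows essentially the same approach as the paper: induction on $n$, peeling off the first coordinate via $w'\in S_{n-2}$ and the modified weight sequence $\lambda'(z_1)$, and comparing the rank inequalities for $A_w$ with those for $A_{w'}$ using Lemmas~\ref{rank_lemma_S+} and~\ref{rank_lemma_w'}. The paper organizes the argument as two separate inclusions $\GZ\subset\FM$ and $\FM\subset\GZ$ rather than as a single recursive description, but the inequality comparisons you sketch (splitting on whether $1\in S$ and whether the rank of $(A_w)_{T,\bullet}$ jumps at column $r_1$) are exactly the ones carried out there, and your ``expected obstacle'' of checking the redundant inequalities is precisely what the paper's case analysis handles.
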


\begin{proof}
We induct on $n$. The first interesting case is $n=3$, which we have verified in Example \ref{FM_GZ_3}. Assume the conclusion for $\Fl(n-1)$, and fix $w\in S_{n-1}$ and the corresponding $\vr$. We first show that $\GZ(\lambda,w)\subset\FM(\lambda,w)$. Let $\vz=\Phi(\vx)\in\GZ(\lambda,w)$ be any point. By assumption, we have $x_{i,i+1}=\lambda_i$ for $i=1,2,\ldots,r_1-1$ and $i=r_1+1,\ldots,n-1$, and $x_{r_1,r_1+1}=\lambda_{r_1}+\lambda_{r_1+1}-z_1$.

Write now:
\begin{align*}
\vz'&=(z_2,\ldots,z_n),\\
\vx'&=(x_{i,j})_{1\le i<j+1\le n},\\
\lambda'&=(\lambda_1,\cdots,\lambda_{r_1-1},\lambda_{r_1}+\lambda_{r_1+1}-z_1,\lambda_{r_1+2},\ldots,\lambda_n),\\
\vr'&=(r_2,\ldots,r_{n-1}),
\end{align*}
and furthermore let $w'\in S_{n-2}$ be the permutation corresponding to $\vr'$. Then, we have $\vz'=\Phi(\vx')\in\GZ(\lambda',w')=\FM(\lambda',w')$, by the inductive hypothesis.

The $(n-1)\times(n-1)$ matrix $A_{w'}$ is obtained by deleting the first row and $(r_1+1)$-st column of $A_w$. The $(r_1+1)$-st column of $A_w$ is zero except in the first row. For any $S\subset\{2,3,\ldots,n\}$, we therefore have (again, using the convention that the rows of $A_{w'}$ are labelled $2,\ldots,n$ but the columns are labelled $1,\ldots,n-1$)
\begin{align*}
z_S=z'_S &\le \sum_{j\in[1,n-1]}(\rk((A_{w'})_{S,j})-\rk((A_{w'})_{S,j-1}))\lambda'_j\\
&=\sum_{j\in[1,r_1-1]\cup[r_1+2,n]}(\rk((A_{w})_{S,j})-\rk((A_{w})_{S,j-1}))\lambda_j\\
&\qquad+(\rk((A_{w})_{S,r_1})-\rk((A_{w})_{S,r_1-1}))(\lambda_{r_1}+\lambda_{r_1+1}-z_1),
\end{align*}
where we have applied Lemma \ref{rank_lemma_w'}.

This implies on the one hand that
\begin{align*}
z_S &\le\sum_{j\in[1,r_1]\cup[r_1+2,n]}(\rk((A_{w})_{S,j})-\rk((A_{w})_{S,j-1}))\lambda_j\\
&=\sum_{j\in[1,n]}(\rk((A_{w})_{S,j})-\rk((A_{w})_{S,j-1}))\lambda_j,
\end{align*}
because $\lambda_{r_1}+\lambda_{r_1+1}-z_1\le\lambda_{r_1}$ and $\rk((A_{w})_{S,r_1+1})=\rk((A_{w})_{S,r_1})$.

On the other hand, by Lemma \ref{rank_lemma_S+}, we have
\begin{align*}
z_S &\le \sum_{j\in[1,r_1-1]\cup[r_1+2,n]}(\rk((A_{w})_{S,j})-\rk((A_{w})_{S,j-1}))\lambda_j\\
&\qquad+(\rk((A_{w})_{S,r_1})-\rk((A_{w})_{S,r_1-1}))(\lambda_{r_1}+\lambda_{r_1+1}-z_1)\\
&\le \sum_{j\in[1,r_1-1]\cup[r_1+2,n]}(\rk((A_{w})_{S^+,j})-\rk((A_{w})_{S^+,j-1}))\lambda_j\\
&\qquad+(\rk((A_{w})_{S^+,r_1+1})-\rk((A_{w})_{S^+,r_1}))\lambda_{r_1+1}+(\lambda_{r_1}-z_1)
\end{align*}
and so
\begin{equation*}
z_{S^+} \le \sum_{j\in[1,n]}(\rk((A_{w})_{S^+,j})-\rk((A_{w})_{S^+,j-1}))\lambda_j,
\end{equation*}
because $\rk((A_{w})_{S^+,r_1})-\rk((A_{w})_{S^+,r_1-1})=1$.
The vector $\vz\in\GZ(\lambda,w)$ therefore satisfies all needed inequalities for $\FM(\lambda,w)$, so we conclude that $\GZ(\lambda,w)\subset\FM(\lambda,w)$.

Conversely, suppose that $\vz\in\FM(\lambda,w)$. In particular, we have
\begin{equation*}
z_1\le \lambda_{r_1},
\end{equation*}
because $(A_w)_{\{1\},j}=0$ for $j\le r_1-1$, and also
\begin{equation*}
z_2+\cdots+z_n\le \sum_{j\in[1,r_1]\cup[r_1+2,n]}\lambda_j,
\end{equation*}
because the $(r_1+1)$-st column of $(A_w)_{{\{2,\ldots,n\}},r_1+1}$ is zero. In particular, we have $z_1\in[\lambda_{r_1+1},\lambda_{r_1}]$.

Define again
\begin{align*}
\vz'&=(z_2,\ldots,z_n),\\
\lambda'&=(\lambda_1,\cdots,\lambda_{r_1-1},\lambda_{r_1}+\lambda_{r_1+1}-z_1,\lambda_{r_1+2},\ldots,\lambda_n),\\
\vr'&=(r_2,\ldots,r_{n-1}),
\end{align*}
and let $w'\in S_{n-2}$ be the corresponding permutation to $\vr'$. It now suffices to show that $\vz'\in\FM(\lambda',w')$, because by the inductive hypothesis, there exists $\vx'\in\GZ(\lambda',w')$ with $\vz'=\Phi(\vx')$, and combining the data of $\vx'$ and $\lambda$ gives a point $\vx\in\GZ(\lambda,w)$ with $\Phi(\vx)=\vz$.

That $\vz'\in\FM(\lambda',w')$ amounts to the requirement that, for all $S\subset\{2,3,\ldots,n\}$, we have
\begin{align*}
z_S &\le \sum_{j\in[1,n-1]}(\rk((A_{w'})_{S,j})-\rk((A_{w'})_{S,j-1}))\lambda'_j\\
&=\sum_{j\in[1,r_1-1]\cup[r_1+2,n]}(\rk((A_{w})_{S,j})-\rk((A_{w})_{S,j-1}))\lambda_j\\
&\qquad+(\rk((A_{w'})_{S,r_1})-\rk((A_{w'})_{S,r_1-1}))(\lambda_{r_1}+\lambda_{r_1+1}-z_1)\\
&=\sum_{j\in[1,r_1-1]\cup[r_1+2,n]}(\rk((A_{w})_{S^+,j})-\rk((A_{w})_{S^+,j-1}))\lambda_j\\
&\qquad+(\rk((A_{w})_{S^+,r_1+1})-\rk((A_{w})_{S^+,r_1}))(\lambda_{r_1}+\lambda_{r_1+1}-z_1)\\
%&=\sum_{j\in[1,r_1-1]\cup[r_1+2,n]}(\rk((A_{w})_{S,j})-\rk((A_{w})_{S,j-1}))\lambda_j\\
%&\qquad+(\lambda_{r_1}+\lambda_{r_1+1}-z_1).
\end{align*}
by Lemmas \ref{rank_lemma_S+} and \ref{rank_lemma_w'}. 

If $\rk((A_{w'})_{S,r_1})=\rk((A_{w'})_{S,r_1-1})$, or equivalently, if $\rk((A_{w})_{S,r_1})=\rk((A_{w})_{S,r_1-1})$), then using the formula on the second line, the required inequality is exactly the upper bound on $z_S$ in the definition of $\FM(\lambda,w)$.
Indeed, in this case, the right-most columns of $(A_w)_{S,r_1}$ and $(A_w)_{S,r_1+1}$ (which is zero in the latter case) do not increase the rank. 

If instead $\rk((A_{w'})_{S,r_1})-\rk((A_{w'})_{S,r_1-1})=1$, or equivalently, if $\rk((A_{w})_{S^+,r_1+1})-\rk((A_{w})_{S^+,r_1})=1$, then using the formula on the last line, the required inequality follows from the upper bound on $z_{S^+}$ in the definition of $\FM(\lambda,w)$. Therefore, we conclude that $\vz'\in\FM(\lambda',w')$, completing the proof.
\end{proof}

\begin{cor}\label{cor:Zw_dim}
The orbit closure $Z_w=\overline{T\cdot \cL_w}\subset\Fl(n)$ is irreducible of dimension $n-1$.
\end{cor}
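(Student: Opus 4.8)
The plan is to combine two facts, both already available. First, irreducibility of $Z_w$ requires no work: by definition $Z_w=\overline{T\cdot\cL_w}$ is the Zariski closure of the image of the morphism $T\to\Fl(n)$, $t\mapsto t\cdot\cL_w$; since $T$ is an irreducible variety, so is its image $T\cdot\cL_w$, and hence so is its closure $Z_w$.

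For the dimension, I would invoke the key property of the moment map recorded in \S\ref{sec:moment}: for any non-singular $n\times n$ matrix $A$, the dimension of the orbit closure $Z_A\subset\Fl(n)$ equals the dimension of its flag matroid polytope $\FM(A)$. Applying this with $A=A_w$ (so that $Z_{A_w}=Z_w$), and recalling that $\FM(A_w)=\FM(\lambda,A_w)=\FM(\lambda,w)$ for $\lambda=(n-1,n-2,\ldots,1,0)$, Theorem \ref{FM=GZ} identifies this polytope with $\GZ(\lambda,w)$. It therefore suffices to observe that $\dim\GZ(\lambda,w)=n-1$. This is exactly the remark made immediately after the definition of $\GZ(\lambda,w)$: once the coordinates $x_{i,i+j-1}$ on one diagonal are fixed, the equations cutting out $\GZ(\lambda,w)$ pin down every entry $x_{i,i+j}$ on the next diagonal except the single entry $x_{r_j,r_j+j}$, which ranges over the interval
\begin{equation*}
x_{r_j+1,r_j+j}\le x_{r_j,r_j+j}\le x_{r_j,r_j+j-1}.
\end{equation*}
Since $\lambda$ is strictly decreasing, this interval is nondegenerate for each $j$, so the $n-1$ diagonals below the fixed top row contribute exactly $n-1$ independent free parameters, and $\GZ(\lambda,w)$ is a full-dimensional $(n-1)$-polytope. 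Combining, $\dim Z_w=\dim\GZ(\lambda,w)=n-1$.

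There is essentially no obstacle here: the corollary is a formal consequence of Theorem \ref{FM=GZ} together with the moment-map dictionary of \S\ref{sec:moment}. The only point deserving a word of care is that $\GZ(\lambda,w)$ is genuinely $(n-1)$-dimensional rather than of smaller dimension; this is why one wants $\lambda=(n-1,\ldots,0)$ strictly decreasing (equivalently, the entries of $A_w$ generic among matrices of its prescribed shape), so that each of the $n-1$ defining intervals above has nonempty interior.
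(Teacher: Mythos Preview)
Your proof is correct and follows essentially the same approach as the paper: irreducibility is immediate from $Z_w$ being an orbit closure, and the dimension is read off from the moment map dictionary together with Theorem~\ref{FM=GZ} and the observation that $\GZ(\lambda,w)$ has dimension $n-1$. Your version is simply a more detailed unpacking of the paper's two-sentence argument, including the useful remark that one needs $\lambda$ strictly decreasing for the polytope to be genuinely $(n-1)$-dimensional.
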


\begin{proof}
The irreducibility is immediate from the fact that $Z_w$ is an orbit closure of a point. The moment map image of $Z_w$ is equal to $\FM(\lambda,w)=\GZ(\lambda,w)$, which has dimension $n-1$, hence $Z_w$ has dimension $n-1$.
\end{proof}

\begin{cor}\label{Z_to_Zw}
We have an equality of cycles
\begin{equation*}
[Z]=\sum_{w\in S_{n-1}}[Z_w]
\end{equation*}
on $\Fl(n)$.
\end{cor}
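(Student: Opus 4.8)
The plan is to combine flatness of the degenerations of \S\ref{sec:explicit_degen} with the moment-polytope dictionary of \S\ref{sec:moment} and the tiling property of the HHMP decomposition. Fix $\lambda=(n-1,n-2,\ldots,0)$, so that $\Perm(\lambda)=\Perm(n)$ and $\FM(\lambda,w)=\FM(\lambda,A_w)$ is the moment map image of $Z_w$.

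First I would note that every degeneration constructed in \S\ref{sec:explicit_degen} is a flat, $T$-equivariant family over a discrete valuation ring --- namely the closure of $Z^{j-1}_t$, $t\neq 0$, inside $\Fl(n)\times\Spec\bC[[t]]$, together with its analogues in the later rounds --- so it preserves the cohomology class of its fibers, and since the scheme it degenerates is in each case an irreducible orbit closure of dimension $n-1$, the special fiber is pure of dimension $n-1$. Moreover, by \S\ref{sec:moment}, the special fiber of such a toric degeneration of an orbit closure $Z_A$ is generically reduced along each component, its components are again $T$-orbit closures, and their flag matroid polytopes form a polyhedral subdivision of $\FM(\lambda,A)$. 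Iterating this down the entire tree of degenerations --- the first-row degenerations, then the degenerations iterated over rows, whose leaves are by construction the orbit closures $Z_w$ for $w\in S_{n-1}$ --- I would conclude that every component of every special fiber occurring is a $T$-orbit closure of dimension $n-1$ appearing with multiplicity one, and that the flag matroid polytopes of the leaves of the tree (including any not explicitly tracked in \S\ref{sec:explicit_degen}) form a polyhedral subdivision of $\Perm(\lambda)$ into $(n-1)$-dimensional cells.

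Now I would invoke the HHMP decomposition to pin down the leaves. By Theorem \ref{FM=GZ} the polytope of $Z_w$ is $\GZ(\lambda,w)$, and by \cite[Proposition 5.2]{hhmp} the cells $\GZ(\lambda,w)$, $w\in S_{n-1}$, already tile $\Perm(\lambda)$: their interiors are pairwise disjoint (as $\Phi$ is injective on their union) and their union is all of $\Perm(\lambda)$. Since $\GZ(\lambda,w)=\FM(\lambda,w)$ is an $(n-1)$-dimensional polytope (Corollary \ref{cor:Zw_dim}), no further $(n-1)$-dimensional cell can be added to this subdivision without overlapping interiors; hence the leaves of the tree are exactly the $Z_w$, no spurious component appears at any intermediate stage (such a component would survive to the final special fiber as an extra leaf), and each $Z_w$ occurs with multiplicity one. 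Chasing the resulting equalities of cycle classes down the tree --- $[Z]=[Z^1]+[Z^1_+]$, then $[Z^1]=[Z^2]+[Z^2_+]$, and so on, finally recursing inside each $Z^j_+$ --- then yields $[Z]=\sum_{w\in S_{n-1}}[Z_w]$.

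I expect the main obstacle to be exactly this bookkeeping: controlling that the degeneration tree has no untracked or repeated components and that its leaves are precisely $\{Z_w\}_{w\in S_{n-1}}$, which forces one to apply the subdivision statement of \S\ref{sec:moment} iteratively and to track that an untracked component produced at an early step persists to the final special fiber. If a more self-contained closing is preferred, once one has $[Z]=\sum_w[Z_w]+[E]$ with $E$ effective of dimension $n-1$, one can instead conclude $E=\emptyset$ numerically: fixing a $T$-linearized ample bundle on $\Fl(n)$ associated to $\lambda$, the degree of a torus-orbit closure is $(n-1)!$ times the Euclidean volume of its moment polytope \cite{gs,kap}, so
\begin{equation*}
\deg Z=(n-1)!\operatorname{vol}\big(\Perm(\lambda)\big)=(n-1)!\sum_{w}\operatorname{vol}\big(\GZ(\lambda,w)\big)=\sum_{w}\deg Z_w,
\end{equation*}
using that the HHMP decomposition is a genuine subdivision; as degree is preserved under flat degeneration and is additive on effective cycles, $\deg E=0$, whence $E=\emptyset$. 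Equivalently, one may first assemble the whole tree into a single toric degeneration of $Z$ and then apply \S\ref{sec:moment} once.
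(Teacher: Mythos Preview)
Your argument is correct and follows essentially the same route as the paper: use Corollary \ref{cor:Zw_dim} and Theorem \ref{FM=GZ} to identify the moment polytopes of the $Z_w$ with the HHMP cells, then invoke the moment-polytope dictionary of \S\ref{sec:moment} to conclude that the $Z_w$ exhaust the components of the final special fiber with multiplicity one. The only minor difference is that the paper deduces the intermediate subschemes have dimension $n-1$ by sandwiching between $\dim Z$ and $\dim Z_w$ rather than invoking flatness directly; your alternative closing via the volume-degree formula is a nice self-contained check but not needed.
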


\begin{proof}
The fact that the $Z_w$ all have dimension $n-1$ implies that all of the intermediate subschemes in the degeneration of $Z$ to $Z_w$ have dimension $n-1$, and are components of the corresponding flat limits. Thus, the construction of \S\ref{sec:explicit_degen}, in fact, gives a sequence of toric degenerations of $Z$ into a union of irreducible components, necessarily of multiplicity 1, including all of the $Z_w$ in the end. On the other hand, the moment map images of the $Z_w$, which are equal to the $\GZ(w)$ by Theorem \ref{FM=GZ}, already give a polyhedral subdivision of $\mu(Z)=\Perm(n)$, namely, the HHMP decomposition. Therefore, no other components can appear at the end of this sequence of degenerations, and the claim follows.
\end{proof}

\begin{rem}
The above analysis shows that the data of the components $Z_w$ appearing in the end of our degeneration of $Z$ are essentially encoded by the HHMP decomposition of $\Perm(n)$. The components $Z_w$ are also in bijection with \emph{decreasing binary trees}, which give a recursive way to obtain the HHMP decomposition by slicing by hyperplanes, see \cite[Theorem 6.5]{nt_remixed}. We expect that this slicing procedure more precisely encodes the sequence of intermediate degenerations described in \S\ref{sec:explicit_degen}, but we have not checked this carefully.
\end{rem}

\section{The Anderson-Tymoczko formula}\label{sec:at}

To complete the proof of the Anderson-Tymoczko formula, it suffices to compute $[Z_w]$. We do so by identifying the $Z_w$ as Richardson varieties. Recall that we have fixed a basis $\bC^n=\langle e_1,\ldots,e_n\rangle$. Write in addition $H_i=\langle e_1,\ldots,\widehat{e_i},\ldots,e_n\rangle$. For any subset $S\subset[n]$, write $H_S=\cap_{i\in S}H_i$.

\begin{thm}\label{orbit=richardson}
Let $w\in S_{n-1}$ be a permutation.

Let $F$ be the flag $(0\subset H_{[1,n-1]}\subset H_{[1,n-2]}\subset\cdots\subset H_1\subset\bC^n)$.

Let $F'$ be the transverse flag $(0\subset H_{[2,n]}\subset H_{[3,n]}\subset\cdots\subset H_{n}\subset \bC^n)$. 

Then, the orbit closure $Z_w$ is equal, as a scheme, to the Richardson variety $\Sigma^F_{\iota(w)}\cap\Sigma^{F'}_{\overline{\iota}(w_0w)}$.
\end{thm}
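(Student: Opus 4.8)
The plan is to show both $Z_w$ and the candidate Richardson variety $R_w:=\Sigma^F_{\iota(w)}\cap\Sigma^{F'}_{\overline{\iota}(w_0w)}$ are irreducible of dimension $n-1$, so that a single inclusion forces equality. For $R_w$: since $\iota$ appends a fixed point at the maximum $n$ and $\overline{\iota}$ one at the minimum $1$, we have $\ell(\iota(w))=\ell(w)$ and $\ell(\overline{\iota}(w_0w))=\ell(w_0w)=\binom{n-1}{2}-\ell(w)$ (with $w_0\in S_{n-1}$), so $\ell(\iota(w))+\ell(\overline{\iota}(w_0w))=\binom{n-1}{2}$; hence a nonempty $R_w$ is irreducible, reduced, of codimension $\binom{n-1}{2}$, i.e.\ of dimension $\binom n2-\binom{n-1}{2}=n-1$. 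By Corollary \ref{cor:Zw_dim}, $Z_w$ is irreducible of dimension $n-1$. Because $F$ and $F'$ are coordinate flags, $\Sigma^F_{\iota(w)}$ and $\Sigma^{F'}_{\overline{\iota}(w_0w)}$ are closed and $T$-stable, so it suffices to check that the single flag $\cL_w$ lies in both; this gives $Z_w=\overline{T\cdot\cL_w}\subseteq R_w$, and then $Z_w=R_w$ as schemes since $R_w$ is reduced, irreducible, and of dimension $n-1$ (and in particular nonempty).

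Next I would unwind the two memberships in terms of $A_w$. Writing $\cL_w=(0\subset L_1\subset\cdots\subset L_{n-1}\subset\bC^n)$, each $L_i$ is the span of the first $i$ columns of $A_w$; since $F_{n-j}=\langle e_{j+1},\dots,e_n\rangle$ and $F'_{n-j}=\langle e_1,\dots,e_{n-j}\rangle$, we get
\[
\begin{aligned}
\dim(L_i\cap F_{n-j})&=i-\rk\big((A_w)_{\{1,\dots,j\},\,i}\big),\\
\dim(L_i\cap F'_{n-j})&=i-\rk\big((A_w)_{\{n-j+1,\dots,n\},\,i}\big),
\end{aligned}
\]
where $A_{S,j}$ denotes the submatrix on rows $S$ and the first $j$ columns, as in the text. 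Using $\iota(w)(c)=w(c)$ for $c\le n-1$ and $\overline{\iota}(w_0w)(c)=n+1-w(c-1)$ for $c\ge 2$ (and $\overline{\iota}(w_0w)(1)=1$), the conditions $\cL_w\in\Sigma^F_{\iota(w)}$ and $\cL_w\in\Sigma^{F'}_{\overline{\iota}(w_0w)}$ become, respectively, the families of inequalities (indexed by $1\le i,j\le n-1$)
\[
\begin{aligned}
\rk\big((A_w)_{\{1,\dots,j\},\,i}\big)&\le \#\{c\le i:\ w(c)\le j\},\\
\rk\big((A_w)_{\{n-j+1,\dots,n\},\,i}\big)&\le 1+\#\{c\le i-1:\ w(c)\ge n+1-j\}.
\end{aligned}
\]

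To verify these rank bounds I would induct on $n$, exactly as in the proof of Theorem \ref{FM=GZ}: Lemmas \ref{rank_lemma_S+} and \ref{rank_lemma_w'} express the ranks of the submatrices of $A_w$ appearing above in terms of those of $A_{w'}$ (for $w'\leftrightarrow(r_2,\dots,r_{n-1})$), after which the inductive hypothesis for $\Fl(n-1)$ supplies the desired inequalities. (Geometrically, the inductive step is the observation that column $r_1+1$ of $A_w$ is a nonzero multiple of $e_1$, so that $e_1\in L_{r_1+1}$ and the projection $\bC^n\to\bC^n/\langle e_1\rangle$ carries $\cL_w$ to $\cL_{w'}$.) As a reassurance that this is on track, the $F'$-family can also be checked directly: by genericity each rank equals the term rank, and the matching sending column $1$ to row $n$ and each column $c\le i$ with $w(c-1)\ge n+1-j$ to row $w(c-1)$, together with the vertex cover consisting of column $1$ and those same columns, both have size $1+\#\{c\le i-1:\ w(c)\ge n+1-j\}$, so König's theorem yields the bound; here one uses that the $\star_1$ entry in row $i'$ lies in column $1$ or in a column $c$ with $w(c-1)>i'$.

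The main obstacle is the combinatorial content of the previous paragraph: keeping the interval row-sets $\{1,\dots,j\}$, $\{n-j+1,\dots,n\}$ and the shifted column indices aligned through the inductive step, and threading the conditions for $\overline{\iota}(w_0w)$ — which interleave the reversal $w\mapsto w_0w$ with the shift $\overline{\iota}$ — correctly; the $F$-family is the genuinely delicate one, since no uniform column-or-row cover realizes its term rank in general. Two minor points to settle en route: that generic choices of the starred entries actually realize the relevant term ranks, and that nonemptiness of $R_w$ need not be established separately, being subsumed by $\cL_w\in R_w$.
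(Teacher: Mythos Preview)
Your framework matches the paper's exactly: both $Z_w$ and $R_w$ are irreducible and reduced of dimension $n-1$, so it suffices to place the single flag $\cL_w$ in each Schubert variety; your translation of the two memberships into rank bounds on interval submatrices of $A_w$ is also correct.

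The divergence is in how the $F$-family bound is verified. The paper does \emph{not} induct on $n$. After reducing to the case $w(i)>j$, it proves a one-shot combinatorial claim: if the symbol $\star_1$ appears in the submatrix on rows $\{1,\dots,j\}$ and the first $i$ columns, say in row $\ell$, then the $\star_2$ in that same row $\ell$ also lies in this submatrix. (Otherwise column $i+1$, whose $\star_2$ sits in row $w(i)>j\ge\ell$ and hence not above row $\ell$, would have been a legal and strictly further-right position for that $\star_1$.) Consequently every nonzero entry of the submatrix lies in one of the at most $i-1-\alpha$ rows carrying a $\star_2$, where $\alpha=\#\{c\in[2,i]:w(c-1)>j\}$; this gives $L_i\subset H_S$ for a suitable $S\subset[1,j]$ and the rank bound drops out immediately. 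Your proposed induction via Lemmas~\ref{rank_lemma_S+} and~\ref{rank_lemma_w'} should go through, but it buys you exactly the bookkeeping headache you anticipate (reindexing the interval after deleting row~$1$ and column $r_1+1$, and matching the $w'$-counts back to $w$); the paper's $\star_1/\star_2$ observation bypasses all of it. For the $F'$-family your K\"onig/term-rank argument is essentially the paper's direct observation that column $k+1$ of $A_w$ lies in $H_{[w(k)+1,n]}$, rephrased in matching language; the fact you isolate about the column position of $\star_1$ in row $i'$ is precisely the ingredient the paper uses (in the $F$-direction) as well.
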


Let us first give representative examples of Theorem \ref{orbit=richardson}.

\begin{ex}
Consider the matrices
\begin{equation*}
A_{\id}=
\begin{bmatrix}
* & * & 0 \\
*& 0 & *\\
* & 0 & 0
\end{bmatrix}
,
A_{(12)}=
\begin{bmatrix}
0 & * & * \\
* & * & 0\\
* & 0 & 0
\end{bmatrix}
.
\end{equation*}
in the case $n=3$, from Example \ref{n=3_matrices}. We see that $Z_{\id}$ is precisely the locus where $L_2$ intersects $H_{[2,3]}=F'_1$ non-trivially, as the vanishing entries in the last column impose no constraint on the flag $\cL$. Accordingly, $\Sigma^F_{\iota(\id)}=\Sigma^F_{\id}$ is all of $\Fl(3)$, and $\Sigma^{F'}_{\overline{\iota}(w_0\cdot\id)}=\Sigma^{F'}_{(23)}$ is the divisor where $\dim(L_2\cap F'_1)\ge 1$. Similarly, $Z_{(12)}$ is the locus where $L_1\subset H_1=F_1$, which is equal to $\Sigma^{F}_{(12)}\cap \Sigma^{F'}_{\id}$.
\end{ex}

\begin{ex}
Let
\begin{equation*}
w=
\begin{bmatrix}
1 & 2 & 3 & 4 & 5 & 6 & 7 \\
3 & 7 & 1 & 2 & 5 & 4 & 6
\end{bmatrix}
\in S_7.
\end{equation*}
Then, from Example \ref{ex:schubert}, a generic point of $\Sigma^F_{\iota(w)}\subset\Fl(8)$ may be represented by an $8\times 8$ matrix
\begin{equation*}
A_F=
\begin{bmatrix}
0 & 0 & * & * & 0 & * & * & * \\
0 & 0 & * & * & 0 & * & * & * \\
* & 0 & * & * & 0 & * & * & * \\
* & 0 & * & * & 0 & * & * & * \\
* & 0 & * & * & * & * & * & * \\
* & 0 & * & * & * & * & * & * \\
* & * & * & * & * & * & * & * \\
* & * & * & * & * & * & * & * 
\end{bmatrix}
\end{equation*}
where the rows correspond to the basis vectors $e_1,\ldots,e_8$ in order. Similarly, we compute that
\begin{equation*}
\overline{\iota}(w_0w)=
\begin{bmatrix}
1 & 2 & 3 & 4 & 5 & 6 & 7 & 8 \\
1 & 6 & 2 & 8 & 7 & 4 & 5 & 3
\end{bmatrix}
\end{equation*}
and that a generic point of $\Sigma^{F'}_{\overline{\iota}(w_0w)}\subset\Fl(8)$ may be represented by an $8\times 8$ matrix
\begin{equation*}
A_{F'}=
\begin{bmatrix}
* & * & * & * & * & * & * & * \\
* & * & * & 0 & * & * & * & * \\
* & * & * & 0 & 0 & * & * & * \\
* & 0 & * & 0 & 0 & * & * & * \\
* & 0 & * & 0 & 0 & * & 0 & * \\
* & 0 & * & 0 & 0 & 0 & 0 & * \\
* & 0 & * & 0 & 0 & 0 & 0 & * \\
* & 0 & * & 0 & 0 & 0 & 0 & * 
\end{bmatrix}
.
\end{equation*}

Theorem \ref{orbit=richardson} asserts that the intersection of these two Schubert subvarieties of $\Fl(8)$ is equal to the torus orbit closure $Z_w$ of the flag $(0\subset L_1\subset \cdots\subset L_7\subset\bC^8)\subset \Fl(8)$ corresponding to the matrix
\begin{equation*}
A_w=
\begin{bmatrix}
0 & 0 & * & * & 0 & 0 & 0 & 0 \\
0 & 0 & * & 0 & * & 0 & 0 & 0 \\
* & * & 0 & 0 & 0 & 0 & 0 & 0 \\
0 & 0 & 0 & 0 & 0 & * & * & 0 \\
0 & 0 & * & 0 & 0 & * & 0 & 0\\
0 & 0 & * & 0 & 0 & 0 & 0 & * \\
* & 0 & * & 0 & 0 & 0 & 0 & 0\\
* & 0 & 0 & 0 & 0 & 0 & 0 & 0
\end{bmatrix}
\end{equation*}
appearing in Example \ref{end_matrix}. The matrix $A_w$ is a specialization of the matrix $A_{F'}$ representing a general point of $\Sigma^{F'}_{\overline{\iota}(w_0w)}\subset\Fl(8)$. However, the same is not true of $A_F$. For example, the second column of $A_w$ does not lie in $H_{[1.6]}=F_2$. However, this is easily rectified by changing basis: for example, some linear combination of the first two columns of $A_w$ lies in $F_2$, so it is still the case that $\dim(L_2\cap F_2)\ge 1$. Similarly, the submatrix of $A_w$ given by the first 4 rows and first 5 columns has rank 3, so we have $\dim(L_5\cap F_4)\ge 2$.
\end{ex}

\begin{proof}[Proof of Theorem \ref{orbit=richardson}]
Both $Z_w$ and $\Sigma^F_{\iota(w)}\cap\Sigma^{F'}_{\overline{\iota}(w_0w)}$ are irreducible and reduced of dimension $n-1$, so it suffices to show that $Z_w\subset\Sigma^F_{\iota(w)}$ and $Z_w\subset\Sigma^{F'}_{\overline{\iota}(w_0w)}$.

We begin by showing that $Z_w\subset\Sigma^{F'}_{\overline{\iota}(w_0w)}$. This amounts to the statement that, for a general point $\cL\in Z_w$, we have
\begin{align*}
\dim(L_i\cap H_{[n+1-j,n]})&\ge\#\left(\{\overline{\iota}(w_0w)(1),\ldots,\overline{\iota}(w_0w)(i)\}\cap\{j+1,\ldots,n\}\right)\\
&=\#\left(\{1,n+1-w(1),\ldots,n+1-w(i-1)\}\cap\{j+1,\ldots,n\}\right)\\
&=\#\left(\{w(1),\ldots,w(i-1)\}\cap\{1,\ldots,n-j\}\right)
\end{align*}
for any $i,j$.
Recall that $L_i$ is the subspace of $\bC^n$ spanned by the first $i$ columns of $A_w$. By construction, the $(k+1)$-st column vector of $A_w$, for $k=1,2,\ldots,i-1$, lies in $H_{[w(k)+1,n]}$, corresponding to the fact that all entries below the symbol $\star_2$ are zero. Therefore, at least $\#\left(\{w(1),\ldots,w(i-1)\}\cap\{1,\ldots,n-j\}\right)$ of the columns $2,\ldots,i$ of $A_w$ are vectors in $H_{[n+1-j,n]}$, establishing the needed inequality.

Now, we show that $Z_w\subset\Sigma^F_{\iota(w)}$. This amounts to the statement that, for a general point $\cL\in Z_w$, we have
\begin{align*}
\dim(L_i\cap H_{[1,j]})&\ge\#\left(\{\iota(w)(1),\ldots,\iota(w)(i)\}\cap\{j+1,\ldots,n\}\right)\\
&=\#\left(\{w(1),\ldots,w(i)\}\cap\{j+1,\ldots,n-1\}\right).
\end{align*}
First, if $w(i)\le j$, then $\dim(L_i\cap H_{[1,j]})\ge \dim(L_{i-1}\cap H_{[1,j]})$ and 
\begin{equation*}
\#\left(\{w(1),\ldots,w(i)\}\cap\{j+1,\ldots,n-1\}\right)=\#\left(\{w(1),\ldots,w(i-1)\}\cap\{j+1,\ldots,n-1\}\right),
\end{equation*}
so we may replace $i$ by $i-1$ and proceed by induction. We therefore assume that $w(i)>j$.

Let $A^{ij}_w$ denote the submatrix of $A_w$ of entries in the first $i$ columns and $j$ rows. Suppose that the symbol $\star_1$ appears in $A^{ij}_w$, in row $\ell\le j$. We claim that the symbol $\star_2$ in the same row as $\star_1$ must also appear in $A^{ij}_w$. Indeed, if this were not the case, then because $\star_2$ appears in column $i+1$ in a row below row $j$, and in particular in no row above row $\ell$, the symbol $\star_1$ would have been placed in row $\ell$ to the right of column $i$, a contradiction.

Let $\alpha=\#\left(\{w(1),\ldots,w(i-1)\}\cap\{j+1,\ldots,n-1\}\right)$ be the number of appearances of the symbol $\star_2$ below row $j$ and between columns $2$ and $i$, inclusive. Then, there are at most $i-1-\alpha$ appearances of the symbol $\star_2$ in $A^{ij}_w$. All appearances of the symbol $\star_1$ in $A^{ij}_w$ must appear in the same rows as the symbols $\star_2$ in $A^{ij}_w$. Thus, for some subset $S\subset\{1,2,\ldots,j\}$ of cardinality at least $j-(i-1-\alpha)$, the corresponding rows of $A^{ij}_w$ are zero.

We therefore have $L_i\subset H_S$. Because $H_{[1,j]}\subset H_S$ has codimension at most $i-1-\alpha$, we also have 
\begin{equation*}
\dim(L_i\cap H_{[1,j]})\ge i-(i-1-\alpha)=\alpha+1,
\end{equation*}
which is exactly the required inequality, because $w(i)>j$ by assumption. This completes the proof.
\end{proof}

\begin{proof}[Proof of Theorem \ref{at_formula}]
Theorem \ref{orbit=richardson} implies that $[Z_w]=\sigma_{\iota(w)}\sigma_{\overline{\iota}(w_0w)}$. The Anderson-Tymoczko formula now follows from Corollary \ref{Z_to_Zw}.
\end{proof}

\section{Grassmannians}\label{sec:grass}

The degeneration of $Z$ to the union of $Z_w$ pushes forward to a toric degeneration of a generic torus orbit closure on any variety of \emph{partial} flags in $\bC^n$ to a union of special orbit closures. We focus on the case of Grassmannians, describing the components that survive under the push-forward and the associated polyhedral subdivision.

Let $\pi:\Fl(n)\to\Gr(r,n)$ be the map remembering only the component $L_r$ of a flag $\cL$. 
\begin{lem}\label{push_zero}
The map $\pi$ has positive-dimensional fibers upon restriction to $Z_w$, and thus sends $[Z_w]$ to zero under push-forward, unless
\begin{equation}\label{perm_ineq}
w(1)>w(2)>\cdots>w(r)=1<w(r+1)<\cdots\cdots<w(n-1).
\end{equation}
\end{lem}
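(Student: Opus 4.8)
The plan is to reduce the claim to a connectivity property of a single matroid, and then verify that property by a direct reading of the construction of $A_w$. First I would observe that $\pi$ is proper and $T$-equivariant, so $\pi(Z_w)=\overline{T\cdot L_r}$, where $L_r\subset\bC^n$ is the span of the first $r$ columns of $A_w$. This is the torus orbit closure of a point of $\Gr(r,n)$, so, just as for the flag variety in \S\ref{sec:moment} (and classically, \cite{gs}), its dimension equals the dimension of the associated matroid polytope; since the latter is $n-1$ when the matroid $M$ of $L_r$ is connected and is at most $n-2$ otherwise, and since $Z_w$ is irreducible of dimension $n-1$ by Corollary \ref{cor:Zw_dim}, the map $\pi|_{Z_w}$ has positive-dimensional general fiber and hence $\pi_*[Z_w]=0$ as soon as $M$ is disconnected. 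Thus it suffices to show that $M$ is disconnected whenever \eqref{perm_ineq} fails.

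Next I would organize the failure of \eqref{perm_ineq}. Since $w(r)=1$ forces $1$ to be the minimal value of $w$, the condition \eqref{perm_ineq} is equivalent to requiring that $w$ have no ascent among positions $1,\dots,r-1$ and no descent among positions $r,\dots,n-2$. So if \eqref{perm_ineq} fails, then either (i) $w(a)<w(a+1)$ for some $a\le r-1$, or (ii) $w(b)>w(b+1)$ for some $b$ with $r\le b\le n-2$. In case (i) I would show that column $a+1$ of $A_w$ is a nonzero scalar multiple of the coordinate vector $e_{w(a)}$; since $a+1\le r$, this gives $e_{w(a)}\in L_r$, so $w(a)$ is a coloop of $M$, and $M$ is disconnected. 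In case (ii) I would show that all of the first $r$ entries of row $w(b+1)$ of $A_w$ vanish, so that $w(b+1)$ is a loop of $M$, and again $M$ is disconnected. Either way $\pi_*[Z_w]=0$, as asserted.

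The substance of the argument is the verification of these two statements about $A_w$, and the main obstacle is simply the bookkeeping of the \emph{rightmost unblocked column} rule governing the symbols $\star_1$. It helps to record the positions explicitly: the symbol $\star_2$ of row $w(k)$ lies in column $k+1$, while the symbol $\star_1$ of row $w(k)$ lies in column $\nu(k)+1$, where $\nu(k)$ is the largest index $j<k$ with $w(j)>w(k)$, and $\nu(k)=0$ if there is none. For (i): the unique $\star_2$ in column $a+1$ is the one in row $w(a)$; if some $\star_1$, coming from a row $i$, also occupied column $a+1$, then $i\le w(a)<w(a+1)$, so column $a+2$ (whose $\star_2$ is in row $w(a+1)$) is still unblocked above row $i$ and — one checks $w^{-1}(i)\ge a+2$ — lies weakly to the left of column $w^{-1}(i)+1$, contradicting that $\star_1$ occupies the rightmost unblocked column. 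For (ii): row $w(b+1)$ has its $\star_2$ in column $b+2>r$; since $w(b)>w(b+1)$, column $b+1$ is unblocked above row $w(b+1)$, so the $\star_1$ of this row is placed in column $b+1>r$ as well, and both nonzero entries of the row avoid the first $r$ columns. Carrying out these two computations carefully — essentially an attentive pass through the definition of $A_w$ — finishes the proof; the converse direction, that \eqref{perm_ineq} makes $\pi|_{Z_w}$ generically finite, is not needed here.
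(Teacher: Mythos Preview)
Your argument is correct, and it takes a genuinely different route from the paper's. The paper does not touch the matroid of $L_r$ at all: instead it invokes the Richardson description of $Z_w$ from Theorem~\ref{orbit=richardson} and checks that, when there is an ascent at some position $s<r$ (respectively a descent at some $s\ge r$), the Schubert inequalities defining $\Sigma^F_{\iota(w)}\cap\Sigma^{F'}_{\overline{\iota}(w_0w)}$ at index $i=s$ are already implied by those at $i=s-1$ and $i=s+1$. Hence $L_s$ may be replaced by any $L'_s$ with $L_{s-1}\subset L'_s\subset L_{s+1}$ without leaving $Z_w$, and the fiber of $\pi|_{Z_w}$ through any point visibly contains a $\bP^1$. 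Your approach instead reads the structure of $A_w$ directly to produce a loop or coloop in the rank-$r$ matroid of $L_r$, forcing $\dim\overline{T\cdot L_r}\le n-2$ and hence $\pi_*[Z_w]=0$ via Corollary~\ref{cor:Zw_dim}. The trade-offs: the paper's argument is self-contained once Theorem~\ref{orbit=richardson} is in hand and exhibits the positive-dimensional fiber explicitly, without appealing to $\dim Z_w=n-1$; your argument bypasses the Richardson identification entirely and gives a clean matroid-theoretic explanation (loop/coloop) for why the image collapses, at the cost of importing Corollary~\ref{cor:Zw_dim} and the Gel'fand--Serganova dimension formula for orbit closures in $\Gr(r,n)$. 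Both are short; yours is arguably the more transparent reason \emph{why} only the ``staircase'' permutations survive.
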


\begin{proof}
We first prove that if $\pi_{*}[Z_w]\neq0$, then we must have $w(1)>w(2)>\cdots>w(r)$. Assume instead that for some $s<r$, we have $w(1)>\cdots>w(s)$ and $w(s)<w(s+1)$. Then, we claim that if the needed conditions
\begin{align*}
\dim(L_i\cap H_{[n+1-j,n]})&\ge(\#\{w(1),\ldots,w(i-1)\}\cap\{1,\ldots,n-j\})\\
\dim(L_i\cap H_{[1,j]})&\ge(\#\{w(1),\ldots,w(i)\}\cap\{j+1,\ldots,n-1\})
\end{align*}
hold for all $i\neq s$ and all $j$, then in fact, they hold for all $i$ and $j$. This implies that if $\cL\in Z_w$, then replacing $L_s$ with any subspace $L'_s$ with $L_{s-1}\subset L'_s\subset L_{s+1}$ yields a flag $\cL'\in Z_w$. If furthermore $s<r$, then this shows that any fiber of $\pi$ upon restriction to $Z_w$ is positive-dimensional, a contradiction.

We now prove the claim. We first have
\begin{align*}
\dim(L_s\cap H_{[1,j]})&\ge\dim(L_{s-1}\cap H_{[1,j]})\\
&\ge\#(\{w(1),\ldots,w(s-1)\}\cap\{j+1,\ldots,n-1\})\\
&=\#(\{w(1),\ldots,w(s)\}\cap\{j+1,\ldots,n-1\})
\end{align*}
unless $w(1)>\cdots>w(s)>j$, in which case we need to prove that $L_s\subset H_{[1,w(s)-1]}$. On the other hand, if $w(s)<w(s+1)$, then we have 
\begin{align*}
\dim(L_{s+1}\cap H_{[1,w(s)-1]})\ge \#(\{w(1),\ldots,w(s+1)\}\cap\{w(s),\ldots,n-1\})=s+1,
\end{align*}
so in fact we have the stronger statement that $L_{s+1}\subset H_{[1,w(s)-1]}$.

Similarly, we have
\begin{align*}
\dim(L_{s}\cap H_{[n+1-j,n]})&\ge\dim(L_{s+1}\cap H_{[n+1-j,n]})-1\\
&\ge(\#\{w(1),\ldots,w(s)\}\cap\{1,\ldots,n-j\})-1\\
&=(\#\{w(1),\ldots,w(s-1)\}\cap\{1,\ldots,n-j\})
\end{align*}
unless $w(1)>\cdots>w(s)>n-j$, in which case the required statement is simply that $\dim(L_{s}\cap H_{[n+1-j,n]})\ge0$. This proves the claim.

Similarly, one proves by downward induction that $w(i)<\cdots<w(n-1)$ for $i\ge r$. 
\end{proof}

If instead \eqref{perm_ineq} holds, then the Schubert variety $\Sigma^{F}_{\iota(w)}\subset\Fl(n)$ pushes forward under $\pi$ to the Schubert variety $\Sigma^F_{\lambda}\subset\Gr(r,n)$, where $\lambda$ is the partition
\begin{equation*}
(w(1)-r,w(2)-(r-1),\ldots,w(r-1)-2,0).
\end{equation*}
On the other hand, the Schubert variety $\Sigma^{F'}_{\overline{\iota}(w)}\subset\Fl(n)$ is the pullback of $\Sigma^{F'}_{\overline{\lambda}}\subset\Gr(r,n)$, where $\overline{\lambda}$ is the complement of $\lambda$ inside the rectangle $(n-r-1)^{r-1}$.

By the projection formula, it follows that the class of a generic torus orbit closure in $\Gr(r,n)$ is given by
\begin{equation*}
\pi_{*}[Z]=\sum_{\lambda\subset(n-r-1)^{r-1}}\sigma_{\lambda}\sigma_{\overline{\lambda}},
\end{equation*}
which was obtained using different methods by Berget-Fink \cite[Theorem 5.1]{bf}.

A typical special orbit closure corresponding to a summand on the right hand side is represented by a $n\times r$ matrix of the form
\begin{equation*}
A_\lambda=
\begin{bmatrix}
0 & 0 & 0 & *\\
0 & 0 & 0 & *\\
0 & 0 & 0 & *\\
0 & 0 & * & *\\
0 & 0 & * & 0\\
0 & * & * & 0\\
0 & * & 0 & 0\\
0 & * & 0 & 0\\
0 & * & 0 & 0\\
* & * & 0 & 0\\
* & 0 & 0 & 0\\
* & 0 & 0 & 0\\
\end{bmatrix}
,
\end{equation*}
obtained by restricting to the first $r$ columns of the matrix $A_w$. The above example corresponds to the permutation $w=(10,6,4,1,2,3,5,7,8,9,11)$ of $n-1=11$, or equivalently the partition $\lambda=(6,3,2)\subset(7)^3$.

The $w(i)$-th row of $A_\lambda$ has non-zero entries in columns $i$ and $i+1$ for $i=1,2,\ldots,r-1$, and each additional row has exactly one non-zero entry in such a way that the non-zero entries in every column are contiguous. In this way, the Schubert conditions corresponding to the cycles $\sigma_{\lambda},\sigma_{\overline{\lambda}}$ are visible, corresponding to the zeroes appearing above and below the path of non-zero entries, respectively.

The degeneration of $\pi(Z)$ into special orbit closures $Z_\lambda\subset\Gr(r,n)$ corresponds to a matroidal decomposition of the simplex $\Delta(r,n)\subset\bR^n$ cut out by the equation $z_1+\cdots+z_n=r$ and the inequalities $0\le z_i\le 1$. Namely, the subpolytopes $\Delta(r,n)_\lambda\subset\Delta(r,n)$ are cut out by the inequalities
\begin{equation*}
z_{[1,w(i)-1]}\le n-i\le z_{[1,w(i)]}
\end{equation*}
for $i=1,2,\ldots,n-1$. This decomposition thus encodes a new proof of the Berget-Fink formula.

\end{document}